\title{Espaces de Berkovich sur $\Z$ : morphismes étales}
\author{Dorian Berger}
\newtheorem{df}{Définition}[section]
\newtheorem{ex}[df]{Exemple}
\newtheorem{prop}[df]{Proposition}
\newtheorem{rem}[df]{Remarque}
\newtheorem{thm}[df]{Théorème}
\newtheorem{lem}[df]{Lemme}
\newtheorem{cor}[df]{Corollaire}
\newtheorem{nota}[df]{Notation}
\newtheorem*{intro_df}{Définition}
\newtheorem*{intro_prop}{Proposition}
\newtheorem*{intro_thm}{Théorème}
\def \A{{\mathcal{A}}}
\def \Aff{{\mathbb{A}}}
\def \B{{\mathcal{B}}}
\def \C{{\mathbb{C}}}
\def \F{{\mathcal{F}}}
\def \H{{\mathcal{H}}}
\def \I{{\mathcal{I}}}
\def \K{{\mathcal{K}}}
\def \m{{\mathfrak{m}}}
\def \M{{\mathcal{M}}}
\def \N{{\mathbb{N}}}
\def \O{{\mathcal{O}}}
\def \R{{\mathbb{R}}}
\def \Xsch{{\mathscr{X}}}
\def \Ysch{{\mathscr{Y}}}
\def \Ssch{{\mathscr{S}}}
\def \V{{\mathcal{V}}}
\def \Z{{\mathbb{Z}}}
\def \Frac{{\mathrm{Frac}}}
\def \Spec{{\mathrm{Spec}}}
\def \an{{\mathrm{an}}}
\def \sch{{\mathrm{sch}}}
\def \Id{{\mathrm{Id}}}
\def \red{{\mathrm{red}}}
\def \Hom{{\mathrm{Hom}}}
\def \ev{{\mathrm{ev}}}
\newcommand\reallywidehat[1]{%
\savestack{\tmpbox}{\stretchto{%
  \scaleto{%
    \scalerel*[\widthof{\ensuremath{#1}}]{\kern-.6pt\bigwedge\kern-.6pt}%
    {\rule[-\textheight/2]{1ex}{\textheight}}
  }{\textheight}%
}{0.5ex}}%
\stackon[1pt]{#1}{\tmpbox}%
}
\newcommand\quotient[2]{\text{\raise1ex\hbox{$#1$}\Big/\lower1ex\hbox{$#2$}}}
\newcommand{\centerrightarrow}[2]{\begin{center} \begin{tikzcd}[ampersand replacement=\&] #1 \ar[r] \& #2 \end{tikzcd} \end{center}}
\newcommand{\Supp}[1]{\mathrm{Supp} \left( #1 \right)}
\subjclass[2020]{14G22, 14G25, 14B25, 14F20}
\keywords{Espaces de Berkovich, géométrie analytique globale, morphismes étales, morphismes lisses}
\begin{document}

\maketitle

\begin{abstract}
Nous étudions les morphismes non ramifiés, étales et lisses entre espaces de Berkovich sur $\Z$. Nous obtenons des analogues des propriétés classiques des morphismes de schémas ainsi que des critères par analyfication. Ces résultats sont aussi valables sur les corps valués complets, les anneaux d'entiers de corps de nombres et les anneaux de valuation discrète. Ces différents cas sont traités de façon unifiée.
\end{abstract}

\renewcommand{\abstractname}{Abstract (Berkovich spaces over Z : étale morphisms)}

\begin{abstract}
We develop properties of unramified, étale and smooth morphisms between Berkovich spaces over $\Z$. We prove that they satisfy properties analogous to those of morphisms of schemes and we provide analytification criteria. Our results hold for any valued field, rings of integers of a number field and discrete valuation rings. Those cases are treated by a unified way.
\end{abstract}

\tableofcontents

\newpage

\section*{Introduction}

Depuis le début du \textsc{XX}\ieme ~siècle, la théorie des nombres s'est enrichie de nombreux apports dus à l'analyse $p$-adique. Ces résultats ont motivé la construction d'une géométrie analytique sur les corps ultramétriques et, à partir des années 60, plusieurs théories sur ce sujet ont émergé. Parmi celles-ci, on peut noter la géométrie rigide de J. Tate (\cite{tate_rigid_1971}), les schémas formels de M. Raynaud (\cite{raynaud_geometrie_1974}), les espaces adiques de R. Huber (\cite{huber_continuous_1993} et \cite{huber_generalization_1994}) ou encore les espaces analytiques de V. Berkovich (\cite{berkovich_spectral_1990}). Cette dernière, à laquelle on doit de nombreuses applications, notamment dans le programme de Langlands, en théorie de Hodge $p$-adique et en dynamique, est riche de bonnes propriétés topologiques : les espaces y sont localement compacts, localement connexes par arc et localement contractiles. Un autre intérêt de la théorie de Berkovich est qu'elle permet la construction d'espaces analytiques sur un anneau de Banach quelconque. En particulier, on peut construire des espaces analytiques sur les anneaux $\C$ et $\Z$, tous deux munis de la valeur absolue usuelle. Dans le premier cas, on retrouve exactement les espaces analytiques complexes tandis que, dans le second cas, on obtient des espaces fibrés en espaces analytiques complexes et $p$-adiques. Bien que ces exemples soient donnés dans le premier chapitre de \cite{berkovich_spectral_1990}, les espaces analytiques sur $\Z$ ne sont pas plus étudiés dans cet ouvrage. La première description approfondie d'un tel espace est celle, due à J. Poineau, présentée dans \cite{poineau_droite_2010} et qui traite le cas de la droite affine analytique.
\medbreak
Dans cet article, on se propose d'étudier les morphismes étales entre espaces analytiques sur $\Z$ ou sur un autre anneau d'entiers de corps de nombres. Un tel morphisme induit un isomorphisme local entre les fibres complexes et un morphisme étale au sens de \cite{berkovich_etale_1993} entre les fibres $p$-adiques. On traite ces deux cas de façon unifiée. Les méthodes utilisées permettent d'obtenir les résultats sur une classe d'anneaux plus générale, comprenant les corps valués complets et les anneaux de valuation discrète.
\medbreak
Dans la première section, on rappelle la construction des espaces analytiques sur un anneau de Banach quelconque. Cette définition suit celle due à V. Berkovich et issue de \cite{berkovich_spectral_1990}. Ensuite, on définit la notion de morphisme d'espaces analytiques et on présente quelques résultats provenant de \cite{poineau_espaces_2013} et \cite{lemanissier_espaces_2020}. Dans la deuxième section, on donne la définition d'une classe d'anneaux de Banach contenant $\Z$ (définition \ref{def anneaux de base géométriques}) à laquelle on se restreindra tout au long de cet article. Pour le reste de cette introduction, $\A$ désignera un tel anneau. Des propriétés fondamentales principalement issues de \cite{lemanissier_espaces_2020} sont aussi présentées dans la section 2. L'étude à laquelle sont consacrées les sections 3 et 4 permet d'obtenir d'importants résultats sur les fibres des espaces analytiques. La principale difficulté provient du fait que la restriction aux fibres d'un morphisme d'espaces analytiques est plus subtile que son analogue schématique : si $f : X \rightarrow S$ est un tel morphisme et $s \in S$, la fibre $X_s = f^{-1}(s)$ est naturellement munie d'une structure d'espace analytique, non pas sur le corps résiduel de $\O_s$, qu'on notera $\kappa(s)$, mais sur son complété $\H(s) = \widehat{\kappa(s)}$. Parmi les résultats obtenus, on notera notamment les suivants, où $\m_x$ (resp. $\m_{X_s,x}$) désigne l'idéal maximal de l'anneau local $\O_x$ (resp. $\O_{X_s,x}$) :

\begin{intro_prop}[{Proposition \ref{rig épais ideal max fibre prop}}]
Soient $f : X \rightarrow S$ un morphisme d'espaces $\A$-analytiques, $x \in X$ et $s = f(x)$ tels que $\kappa(x)$ soit une extension finie de $\kappa(s)$. On a alors :
\begin{center}
$\m_{X_s,x} = \sqrt{\m_{x}\O_{X_s,x}}$.
\end{center}
\end{intro_prop}

\begin{intro_thm}[{Théorème \ref{plat fibre algébrique thm}}]
Soient $f : X \rightarrow S$ un morphisme d'espaces $\A$-analytiques, $x \in X$ et $s = f(x)$. Alors le morphisme \centerrightarrow{\quotient{\O_x}{\m_s\O_x}}{\O_{X_s,x}} est plat.
\end{intro_thm}

Ces résultats sont utiles pour démontrer certains critères par fibres pour les propriétés de morphismes étudiées dans cet article et dont voici la définition :

\begin{intro_df}
Soient $f : X \rightarrow S$ un morphisme d'espaces $\A$-analytiques, $x \in X$ et $s = f(x)$. On dit que $f$ est \emph{plat} (resp. \emph{non ramifié}, resp. \emph{étale}) en $x$ si le morphisme induit $f^\sharp_x : \O_s \rightarrow \O_x$ est plat (resp. non ramifié, resp. étale).
\end{intro_df}

Parmi ces critères, on notera le suivant :

\begin{intro_prop}[{Corollaire \ref{plat par fibres cor}, Proposition \ref{étale par fibres prop}}]
Soient $S$ un espace $\A$-analytique, $f : X \rightarrow Y$ un morphisme d'espaces $S$-analytiques, $x \in X$, $y = f(x)$ et $s \in S$ l'image de $x$. On suppose que $X \rightarrow S$ est plat en $x$. Si $f_s : X_s \rightarrow Y_s$ est plat (resp. étale) en $x$ alors $f : X \rightarrow Y$ est plat (resp. étale) en $x$.
\end{intro_prop}

Plus précisément, la section 3 est consacrée à l'étude d'une propriété ponctuelle de morphisme satisfaite par les morphismes non ramifiés :

\begin{intro_df}
Soient $f : X \rightarrow S$ un morphisme d'espaces $\A$-analytiques, $x \in X$ et $s = f(x)$. On dit que $f$ est \emph{rigide épais} en $x$ si $\kappa(x)$ est une extension finie de $\kappa(s)$.
\end{intro_df}

Cette notion est fondamentale dans la démonstration des différents critères par fibres, avec notamment le fait suivant : si $f$ est rigide épais en $x$, alors $\O_x/\m_{f(x)}\O_x$ est un corps si et seulement si $\O_{X_{f(x)},x}$ en est un (proposition \ref{rig épais corps sur les fibres prop}). Dans la section 4, on applique certains résultats de la section 3 à l'étude des morphismes plats. On en déduit notamment le critère de platitude par fibres énoncé ci-dessus ainsi que la platitude des morphismes de projection $\Aff^n_S \rightarrow S$ (corollaire \ref{plat projection affine cor}) et des morphismes lisses (proposition \ref{lisse plat prop}). Dans la section 5, on établit des résultats sur la dimension de Krull des anneaux locaux de certains points particuliers dont on se sert ensuite pour montrer que les morphismes non ramifiés sont quasi-finis (corollaire \ref{non ramifié isolé dans fibre cor}).
\medbreak
Les résultats sur les morphismes non ramifiés commencent à proprement parler dans la section 6. On y propose une étude systématique de ces morphismes apparus comme outils dans \cite{lemanissier_topology_2019}. Leur structure locale est donnée dans la section 7. On y démontre notamment le critère suivant : un morphisme est non ramifié en un point $x$ si et seulement si sa diagonale est un isomorphisme local en $x$ (proposition \ref{non ramifié diagonale prop}). Les morphismes étales sont étudiés dans la section 8. Outre le critère par fibres déjà énoncé, on y démontre dans le cadre des espaces analytiques certaines propriétés classiques des morphismes de schémas étales comme le caractère ouvert (corollaire \ref{étale ouvert cor}) ou l'invariance topologique (proposition \ref{étale invariance topologique prop}). Dans la neuvième et dernière section, on introduit les morphismes lisses et on montre quelques résultats dont la caractérisation suivante : un morphisme d'espaces analytiques est étale en un point si et seulement s'il est lisse et non ramifié en ce point (corollaire \ref{lisse non ramifié implique étale cor}).
\medbreak
On dispose d'un foncteur d'analytification des $\A$-schémas localement de présentation finie $\Xsch \mapsto \Xsch^\an$ et on note $\rho : \Xsch^\an \rightarrow \Xsch$ le morphisme canonique associé. On montre que, quitte à ajouter certaines hypothèses sur l'anneau de base (voir définition \ref{def anneau de Dedekind analytique}), ce foncteur conserve différentes propriétés de morphisme étudiées dans cet article, répondant ainsi à une conjecture énoncée dans \cite[(2.2.9)]{lemanissier_topology_2019} :

\begin{intro_thm}[{Corollaire \ref{non ramifié analytification cor}, Proposition \ref{étale analytification prop}}]
Soient $f : \Xsch \rightarrow \Ssch$ un morphisme entre $\A$-schémas localement de présentation finie et $x \in \Xsch^\an$. Alors $f$ est non ramifié (resp. étale) en $\rho(x)$ si et seulement si $f^{\an} : \Xsch^{\an} \rightarrow \Ssch^{\an}$ est non ramifié (resp. étale) en $x$.
\end{intro_thm}

Il y a cependant quelques propriétés qui diffèrent entre les morphismes étales de schémas et les morphismes étales d'espaces analytiques. L'exemple le plus flagrant est certainement la présentation locale de tels morphismes :

\begin{intro_prop}[{Corollaire \ref{étale Chevalley cor}}]
Soient $f : X \rightarrow S$ un morphisme d'espaces $\A$-analytiques, $x \in X$ et $s = f(x)$. Alors $f$ est étale en $x$ si et seulement si on dispose d'un polynôme unitaire $P(T) \in \O_s[T]$ dont l'image dans $\kappa(s)[T]$ est irréductible et séparable et tel que $f$ induise un isomorphisme : \[ \quotient{\O_s[T]}{(P(T))} \cong \O_x. \]
\end{intro_prop}

L'analogue schématique de ce résultat nécessite une opération de localisation supplémentaire dont on peut se passer ici grâce aux bonnes propriétés topologiques des espaces en jeu. Ce phénomène est illustré dans la remarque \ref{étale exemple présentation locale rem}.

\subsection*{Remerciements}

Nous remercions Jérôme Poineau pour les nombreuses discussions à l'origine de cet article, ainsi que Thibaud Lemanissier pour ses remarques.

\section{Espaces $\A$-analytiques}

L'objectif de cette section est de rappeler la définition d'espace analytique sur un anneau de Banach quelconque présentée dans le premier chapitre de \cite{berkovich_spectral_1990}. Le lecteur trouvera une référence plus précise en \cite{lemanissier_espaces_2020}, notamment dans les chapitres 1 et 2.
\smallbreak
Dans cette section, $n \in \N$ et $(\A,\|.\|_\A)$ désigne un anneau de Banach, c'est-à-dire un anneau normé et complet pour cette norme.

\begin{df}
Une application $|.| : \A[T_1, \dots, T_n] \rightarrow \R_+$ est une \emph{semi-norme multiplicative sur $\A[T_1, \dots, T_n]$ bornée sur $\A$} si, pour tout $P,Q \in \A[T_1, \dots, T_n]$ et pour tout $a \in \A$, on a :
\begin{itemize}
\item $|0| = 0$,
\item $|1| = 1$,
\item $|P+Q| \leq |P|+|Q|$,
\item $|PQ| = |P||Q|$,
\item $|a| \leq \|a\|_\A$.
\end{itemize} 
\end{df}

\begin{df}
L'ensemble des semi-normes multiplicatives sur $\A[T_1, \dots, T_n]$ bornées sur $\A$ est appelé \emph{espace affine analytique de dimension $n$ sur $\A$} et est noté $\Aff^n_\A$. Dans le cas $n = 0$, cet ensemble est noté $\M(\A)$.
\smallbreak
On munit $\Aff^n_\A$ de la topologie de la convergence simple, c'est-à-dire la topologie la plus grossière telle que les applications d'évaluations $\Aff^n_\A \rightarrow \R_+$, $|.| \mapsto |P|$ soient continues pour $P \in \A[T_1, \dots, T_n]$.
\end{df}

\begin{prop}[{\cite[Corollaire~6.8]{poineau_espaces_2013}}]\label{projection affine ouverte prop}
Le morphisme naturel $\Aff^n_\A \rightarrow \M(\A)$ est ouvert.
\end{prop}

On pensera aux éléments de $\Aff^n_\A$ comme aux points d'un espace et, si $x \in \Aff^n_\A$, on notera $|.|_x : \A[T_1, \dots, T_n] \rightarrow \R_+$ la semi-norme associée. On cherche à présent à construire un faisceau structural sur $\Aff^n_\A$.

\begin{df}
Soit $x \in \Aff^n_\A$. L'idéal $\ker(|.|_x) \subset \A[T_1, \dots, T_n]$ est premier et $|.|_x$ induit une valeur absolue sur \[\Frac\left(\quotient{\A[T_1, \dots, T_n]}{\ker(|.|_x)}\right).\] Le complété de ce corps est appelé \emph{corps résiduel complété en $x$} et noté $\H(x)$.
\smallbreak
On note $\ev_x : \A[T_1, \dots, T_n] \rightarrow \H(x)$ l'application $f \mapsto |f|_x$. Si $f~\in~\A[T_1, \dots, T_n]$, $\ev_x(f)$ est noté $f(x)$.
\end{df}

\begin{df}
Soit $V \subset \Aff^n_\A$ une partie compacte. Le localisé de $\A[T_1, \dots, T_n]$ en $\{ f \in \A[T_1, \dots, T_n] \mid \forall x \in V, f(x) \neq 0 \}$ est appelé \emph{anneau des fractions rationnelles sans pôles sur $V$} et noté $\K(V)$. L'application $\A[T_1, \dots, T_n] \rightarrow \R_+$, $P \mapsto \max_{x \in V}(|P|_x)$ s'étend en une semi-norme sur $\K(V)$, appelée \emph{semi-norme uniforme sur $V$} et notée $\|.\|_V$.
\end{df}

\begin{df}
Soit $U \subset \Aff^n_\A$ un ouvert. L'ensemble des applications $f : U \rightarrow \bigsqcup_{x \in U} \H(x)$ vérifiant, pour tout $x \in U$ :
\begin{itemize}
\item $f(x) \in \H(x)$
\item il existe un voisinage compact $V$ de $x$ dans $U$ et une suite d'éléments de $\K(V)$ qui converge vers $f|_V$ pour la semi-norme uniforme sur $V$.
\end{itemize}
est noté $\O(U)$.
\smallbreak
Pour tout $x \in U$, l'application d'évaluation $\ev_x$ s'étend à $\O(U)$ et, si $f \in \O(U)$, $\ev_x(f)$ est encore noté $f(x)$.
\end{df}

Le foncteur contravariant $V \mapsto \O(V)$ est un faisceau munissant $\Aff^n_\A$ d'une structure d'espace localement annelé. Si $x$ est un point de $\Aff^n_\A$, le corps résiduel de $\O_x$ est appelé \emph{corps résiduel en $x$} et noté $\kappa(x)$. On définit à présent les notions d'espace $\A$-analytique et de morphisme d'espaces $\A$-analytiques.

\begin{df}
Soient $m,n \in \N$ et $U \subset \Aff^m_\A$ et $V \subset \Aff^n_\A$ des ouverts. Un morphisme d'espaces localement annelés $(f,f^\sharp) : (U,\O_U) \rightarrow (V,\O_V)$ est un \emph{morphisme d'espaces $\A$-analytiques} si, pour tout $x \in U$, le morphisme $f^\sharp_x$ induit un plongement isométrique de corps $\kappa(f(x)) \rightarrow \kappa(x)$.
\end{df}

\begin{df}
Soit $U \subset \Aff^n_\A$ un ouvert. Un \emph{fermé analytique de $U$} est un espace localement annelé de la forme $\left( \Supp{\O_U/\I}, \iota^{-1}(\O_U/\I) \right)$ où $\I \subset \O_U$ est un faisceau cohérent d'idéaux et $\iota : \Supp{\O_U/\I} \hookrightarrow U$ est l'inclusion canonique. Dans ce cas, on dit que $\iota$ est une \emph{immersion fermée}.
\smallbreak
Un fermé analytique d'un ouvert de $\Aff^n_\A$ est appelé \emph{modèle local d'espace $\A$-analytique}. Un morphisme entre modèles locaux d'espace $\A$-analytique est un \emph{morphisme d'espaces $\A$-analytiques} s'il provient localement d'un morphisme d'espaces $\A$-analytiques entre ouverts d'espaces affines.
\end{df}

\begin{rem}
Une immersion fermée est un morphisme d'espaces $\A$-analytiques.
\end{rem}

\begin{df}
Un \emph{espace $\A$-analytique} est un espace localement annelé localement isomorphe à un modèle local d'espace $\A$-analytique. Les notions de \emph{morphisme d'espaces $\A$-analytiques} et d'\emph{immersion fermée} se prolongent naturellement aux morphismes entre espaces $\A$-analytiques.
\end{df}

Si $x$ est un point d'un espace $\A$-analytique, le corps résiduel de l'anneau local $\O_x$ sera encore noté $\kappa(x)$.

\begin{rem}
Contrairement à la construction des espaces analytiques sur un corps ultramétrique complet présentée dans \cite{berkovich_spectral_1990}, les disques fermés ne sont pas des espaces $\A$-analytiques en général.
\end{rem}

\begin{df}
Un morphisme d'espaces $\A$-analytiques $X \rightarrow S$ est une \emph{immersion ouverte} s'il induit un isomorphisme entre $X$ et un ouvert de $S$.
\smallbreak
Un morphisme d'espaces $\A$-analytiques est une \emph{immersion} s'il s'écrit comme la composée à gauche d'une immersion ouverte par une immersion fermée.
\end{df}

\begin{prop}[{\cite[Proposition~2.3.7]{lemanissier_espaces_2020}}]\label{factorisation par fermé prop}
Soient $f : X \rightarrow S$ un morphisme d'espaces $\A$-analytiques et $\iota : Y \rightarrow S$ une immersion fermée induisant un isomorphisme $Y \cong \Supp{\O_S/\I}$ où $\I \subset \O_S$ est un faisceau cohérent d'idéaux. Alors $f$ se factorise par $\iota$ si et seulement si $f^*\I = 0$. Dans ce cas, cette factorisation est unique.
\end{prop}

\begin{prop}[{\cite[Corollaire~5.3]{poineau_espaces_2013}}]\label{kappa hensélien prop}
Soient $X$ un espace $\A$-analytique et $x \in X$. Alors le corps $\kappa(x)$ est hensélien.
\end{prop}

\begin{cor}\label{kappa ou H fini séparable cor}
Soient $f : X \rightarrow S$ un morphisme d'espaces $\A$-analytiques, $x \in X$ et $s = f(x)$. Alors les assertions suivantes sont équivalentes :
\begin{itemize}
\item[i)] $\kappa(x)$ est une extension finie séparable de $\kappa(s)$
\item[ii)] $\H(x)$ est une extension finie séparable de $\H(s)$
\end{itemize}
\end{cor}

\begin{proof}
D'après la proposition \ref{kappa hensélien prop}, $\kappa(s)$ est hensélien. Or, $\H(x)$ est le complété de $\kappa(x)$ et $\H(s)$ est celui de $\kappa(s)$. On obtient donc le résultat d'après \cite[Proposition~2.4.1]{berkovich_etale_1993}.
\end{proof}

\section{Anneaux de base géométriques}

On rappelle la définition, issue de \cite{lemanissier_espaces_2020}, d'une classe d'anneaux contenant $\Z$ et possédant de bonnes propriétés permettant d'approfondir l'étude des espaces analytiques sur ceux-ci. Parmi les résultats principaux, on notera la cohérence du faisceau structural (théorème \ref{lemme d'Oka thm}), l'existence de produits fibrés finis et d'un foncteur d'analytification des schémas (théorème \ref{existence produits fibrés analytification thm}) ainsi que des analogues du théorème de l'application finie (théorème \ref{thm application finie}) et du Nullstellensatz de Rückert (théorème \ref{nullstellensatz thm}).
\smallbreak
Dans cette section, $\A$ désigne un anneau de Banach.

\begin{df}
Soient $n \in \N$ et $V \subset \Aff^n_\A$ une partie compacte. On note $\B(V)$ le séparé complété de $\K(V)$ pour la semi-norme uniforme sur $V$. Le morphisme naturel $\A[T_1, \dots, T_n] \rightarrow \B(V)$ induit un morphisme d'espaces localement annelés $f_V : \M(\B(V)) \rightarrow \Aff^n_\A$. On dit que $V$ est \emph{spectralement convexe} si $f_V$ induit un homéomorphisme $\M(\B(V)) \rightarrow V$ ainsi qu'un isomorphisme d'espaces localement annelés $f_V^{-1}(\mathring{V}) \rightarrow \mathring{V}$.
\end{df}

On notera qu'il est démontré dans \cite{poineau_droite_2010} que tout point de $\Aff^n_\A$ admet un système fondamental de voisinages compacts et spectralement convexes.

\begin{df}
Soient $X$ un espace topologique et $x \in X$. Un système fondamental $\V_x$ de voisinages de $x$ est \emph{fin} s'il contient un système fondamental de voisinages de chacun de ses éléments.
\end{df}

\begin{df}
Soient $m,n \in \N$, $x \in \Aff^m_\A$ et $\V_x$ un système fondamental fin de voisinages compacts spectralement convexes de $x$. On dit que $\O_x$ est \emph{fortement régulier de dimension $n$ relativement à $\V_x$} si $\O_x$ est noethérien de dimension $n$ et s'il existe des éléments $f_1, \dots, f_n \in \m_x$ vérifiant :
\begin{itemize}
\item pour tous $V \in \V_x$ et $i \in [\![1, \dots, n]\!]$, $f_i$ appartient à l'image du morphisme naturel $\B(V) \rightarrow \O_x$,
\item pour tout voisinage compact $U$ de $x$, on dispose d'une famille de réels strictement positifs $(C_V)_{V \in \V_x}$ telle que, pour tout $f \in \m_x$ appartenant à l'image du morphisme naturel $\B(U) \rightarrow \O_x$ et tout élément $V \in \V_x$ contenu dans $\mathring{U}$, on dispose de $a_1, \dots, a_n \in \B(V)$ vérifiant, pour tout $i \in [\![1, \dots, n]\!]$, $\|a_i\|_V \leq C_V \|f\|_U$ et $f = a_1f_1 + \dots + a_nf_n$.
\end{itemize}
\end{df}

\begin{df}
Soient $X$ un espace localement annelé et $x \in X$. Alors $X$ \emph{satisfait le principe du prolongement analytique en $x$} si, pour tout $f \in \O_x$ non nul, on dispose d'un voisinage ouvert $U \subset X$ de $x$ tel que, pour tout $t \in U$, l'image de $f$ dans $\O_t$ est non nulle. On dit que $X$ \emph{satisfait le principe du prolongement analytique} si c'est le cas en tout point.
\end{df}

\begin{df}\label{def anneaux de base géométriques}
Un anneau de Banach $(\A,\|.\|_\A)$ est appelé \emph{un anneau de base géométrique} si $\M(\A)$ satisfait le principe du prolongement analytique et si tout $x \in \M(\A)$ admet un système fondamental $\V_x$ de voisinages d'intérieur connexe et vérifiant :
\begin{itemize}
\item $\O_x$ est fortement régulier de dimension $\leq 1$ relativement à $\V_x$,
\item Si $\H(x)$ est trivialement valué et de caractéristique positive alors, pour tout voisinage $V \in \V_x$, il existe un fermé fini $\Gamma \subset V$ tel que, pour tout $f \in \B(V)$, $\|f\|_\Gamma = \|f\|_V$ et tout point d'un tel voisinage $V$ correspond à une semi-norme ultramétrique.
\end{itemize}
\end{df}

\begin{ex}\label{exemples classiques}
Les exemples suivants sont des anneaux de base géométriques :
\begin{itemize}
\item les corps valués complets,
\item les anneaux d'entiers de corps de nombres $\A$ munis de la valeur absolue $\max_\sigma(|\sigma(.)|_\infty)$ où $\sigma$ parcourt l'ensemble des plongements complexes de $\Frac(\A)$ et $|.|_\infty$ désigne la valeur absolue usuelle sur $\C$,
\item les corps hybrides au sens de \cite[Exemple~1.1.15]{lemanissier_espaces_2020},
\item les anneaux de valuation discrète,
\item les anneaux de Dedekind trivialement valués.
\end{itemize}
\end{ex}

Dans la suite de cet article, $\A$ désignera un anneau de base géométrique.

\begin{thm}[{\cite[Théorème~11.9]{poineau_espaces_2013}}]\label{lemme d'Oka thm}
Soit $X$ un espace $\A$-analytique. Alors le faisceau structural $\O_X$ est cohérent.
\end{thm}

\begin{lem}\label{immersion plat donc ouvert lem}
Soient $\iota : X \hookrightarrow Y$ une immersion d'espaces $\A$-analytiques et $x \in X$. Alors $\iota$ est plat en $x$ si et seulement si c'est un isomorphisme local en $x$.
\end{lem}

\begin{proof}
Si $\iota$ est une immersion alors on dispose d'un idéal $I \subset \O_y$ vérifiant $\O_x \cong \O_y/I$. On suppose que $\iota^\sharp_x : \O_y \rightarrow \O_x$ est plat. Comme $\O_y$ est noethérien, on déduit de \cite[\href{https://stacks.math.columbia.edu/tag/05KK}{Tag 05KK}]{stacks_project_authors_stacks_2021}  que $I$ est engendré par un idempotent $e \in \O_y$. Or, $e \neq 1$ car $\O_y/I \neq 0$. L'anneau $\O_y$ étant local et donc connexe, on obtient $e = 0$. Cela signifie que $\O_x \cong \O_y$ et $\iota$ est un isomorphisme local en $x$. La réciproque est immédiate.
\end{proof}

\begin{lem}\label{fini condition surjection lem}
Soient $f : X \rightarrow Y$ un morphisme d'espaces $\A$-analytiques, $x \in X$ et $y = f(x)$. Si $f^\sharp_x : \O_y \rightarrow \O_x$ est surjectif alors il existe un voisinage ouvert $U \subset X$ de $x$ tel que $f|_U$ soit une immersion.
\end{lem}

\begin{proof}
L'idéal $I = \ker(f^\sharp_x) \subset \O_y$ est de type fini car $\O_y$ est noethérien et on choisit donc une famille génératrice finie d'éléments de $I$. Soit $V \subset Y$ un voisinage ouvert de $y$ sur lequel ces générateurs sont définis et $U = f^{-1}(V)$. Ils engendrent alors un faisceau d'idéaux $\I \subset \O_V$ vérifiant $\I_y = I$ et $\left( f^*(\I) \right)_x = 0$ et qui est cohérent d'après le théorème \ref{lemme d'Oka thm}. Quitte à rétrécir $U$, on peut supposer $f^*(\I) = 0$. Alors, d'après la proposition \ref{factorisation par fermé prop}, $f|_U : U \rightarrow V$ se factorise par l'immersion fermée $\iota : \Supp{\O_V/\I} \hookrightarrow V$. On note $g : U \rightarrow \Supp{\O_V/\I}$ le morphisme obtenu. Comme $f^\sharp_x$ est surjectif, le morphisme $g^\sharp_x : \O_{g(x)} \cong \O_y/I \rightarrow \O_x$ est un isomorphisme, $g$ est un isomorphisme local en $x$ et $f|_U : U \rightarrow V$ est une immersion fermée.
\end{proof}

\begin{prop}[{\cite[Proposition~4.1.1]{lemanissier_espaces_2020}}]\label{section globale représenté par A^n prop}
Soient $X$ un espace $\A$-analytique et $n \in \N$. Le foncteur $X \mapsto \Gamma(X,\O_X)^n$ est représenté par $\Aff^n_\A$. De plus, en notant $T_1, \dots, T_n$ les coordonnées de $\Aff^n_\A$, une transformation naturelle \centerrightarrow{\Hom(X,\Aff^n_\A)}{\Gamma(X,\O_X)^n} est donnée par $f \mapsto \left( f^\sharp(T_1), \dots, f^\sharp(T_n) \right)$.
\end{prop}

\begin{thm}[{\cite[Théorèmes 4.1.13 et 4.3.8]{lemanissier_espaces_2020}}]\label{existence produits fibrés analytification thm}
La catégorie des espaces $\A$-analytiques admet des produits fibrés finis ainsi qu'un foncteur d'analytification $\Xsch \mapsto \Xsch^\an$ depuis la catégorie des $\A$-schémas localement de présentation finie.
\end{thm}

\begin{nota}
Si $\Xsch$ est un $\A$-schéma localement de présentation finie, on note $\rho : \Xsch^\an \rightarrow \Xsch$ le morphisme canonique.
\end{nota}

\begin{lem}[{Preuve de \cite[Lemme~6.5.2]{lemanissier_espaces_2020}}]\label{analytification fibre lem}
Soient $f : \Xsch \rightarrow~\Ssch$ un morphisme entre $\A$-schémas localement de présentation finie et $s~\in~\Ssch^\an$. On a alors un isomorphisme canonique : \[ \left(\Xsch_{\rho(s)} \otimes_{\kappa(\rho(s))} \H(s)\right)^\an \cong \left(\Xsch^\an\right)_s. \]
\end{lem}

\begin{lem}\label{card fibre prod lem}
Soient $S$ un espace $\A$-analytique, $X$ et $Y$ des espaces au-dessus de $S$, $s \in S$ et $x \in X$ un point au-dessus de $s$ tel que $\H(x)$ soit une extension séparable de $\H(s)$ de degré $d$. On note $p_X$ et $p_Y$ les projections $X \times_S Y \rightarrow X$ et $X \times_S Y \rightarrow Y$. Alors, pour tout point $y \in Y$, l'ensemble $p_X^{-1}(\{x\}) \cap p_Y^{-1}(\{y\})$ est fini et de cardinal inférieur à $d$.
\end{lem}

\begin{proof}
Soit $y \in Y$. On commence par remarquer que si $y$ ne s'envoie pas sur $s$ alors $p_X^{-1}(\{x\}) \cap p_Y^{-1}(\{y\}) = \emptyset$. On peut donc supposer que $y \in Y_s$ et que les produits fibrés se font sur $\H(s)$. On pose $\{x\} \times_S \{y\} = p_X^{-1}(\{x\}) \cap p_Y^{-1}(\{y\})$.
\smallbreak
Soient $n, m \in \mathbb{N}$, $U$ (respectivement $V$) un voisinage ouvert de $x$ (respectivement $y$) et $i : U \hookrightarrow \mathbb{A}^n_{\H(s)}$ et $j : V \hookrightarrow \mathbb{A}^m_{\H(s)}$ des immersions. Comme $i$ et $j$ induisent une immersion $X \times_{\H(s)} Y \hookrightarrow \mathbb{A}^{n+m}_{\H(s)}$ puis une injection $\{x\} \times_{\H(s)} \{y\} \hookrightarrow \{i(x)\} \times_{\H(s)} \{j(y)\}$, on se ramène au cas $X = \Aff^n_{\H(s)}$ et $Y = \Aff^m_{\H(s)}$.
\smallbreak
On sait que $\{x\}$ (respectivement  $\{y\}$) est une partie compacte spectralement convexe de $\mathbb{A}^n_{\H(s)}$ (respectivement $\mathbb{A}^m_{\H(s)}$). Donc, d'après la proposition \cite[Proposition~4.4.9]{lemanissier_espaces_2020}, on a $\mathcal{B}(\{x\} \times_{\H(s)} \{y\}) \cong \mathcal{B}(\{x\}) \widehat{\otimes}^\mathrm{sp}_{\H(s)} \mathcal{B}(\{y\}) \cong \mathcal{H}(x) \widehat{\otimes}^\mathrm{sp}_{\H(s)} \mathcal{H}(y)$ qui s'écrit $\prod\limits_{\substack{i=1}}^{r} K_i$ où les $K_i$ sont des extensions de $\H(s)$ et $r \leq d$ d'après \cite[Proposition~III.2.2]{weil_basic_1995}. On en déduit que $\{x\} \times_{\H(s)} \{y\}$ est homéomorphe à $\mathcal{M}(\prod\limits_{\substack{i=1}}^{r} K_i)$ qui est composé de $r$ points.
\end{proof}

\begin{prop}[{\cite[Proposition~4.5.7]{lemanissier_espaces_2020}}]\label{graphe immersion prop}
Soient $S$ un espace $\A$-analytiques et $f : X \rightarrow Y$ un morphisme d'espaces $\A$-analytiques au-dessus de $S$. Alors le graphe $\Gamma_f : X \rightarrow X \times_S Y$ est une immersion.
\end{prop}

\begin{df}
Un morphisme d'espaces $\A$-analytiques est \emph{fini} s'il est fini au sens topologique, c'est-à-dire s'il est fermé à fibres finies.
\end{df}

\begin{prop}[{\cite[Proposition~5.1.8]{lemanissier_espaces_2020}}]\label{morphisme fini anneaux locaux prop}
Soient $f : X \rightarrow S$ un morphisme fini d'espaces $\A$-analytiques, $\F$ un faisceau de $\O_X$-modules et $s \in S$. Alors le morphisme naturel \centerrightarrow{(f_*\F)_s}{\prod\limits_{x \in X_s} \F_x} est un isomorphisme de $\O_s$-modules.
\end{prop}

\begin{thm}[{\cite[Théorème~5.2.6]{lemanissier_espaces_2020}}]\label{thm application finie}
Soient $f : X \rightarrow S$ un morphisme fini d'espaces $\A$-analytiques et $\F$ un faisceau cohérent sur $X$. Alors $f_* \F$ est un faisceau cohérent sur $S$.
\end{thm}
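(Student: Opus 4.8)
The plan is to exploit the stalk formula of Proposition~\ref{morphisme fini anneaux locaux} together with the coherence of $\O_S$ (Théorème~\ref{lemme d'Oka}), reducing the statement first to a local one around a point $s \in S$, then to one-point fibres, and finally to the single sheaf $\F = \O_X$. Coherence being local on the base, I fix $s \in S$. Since $f$ is finite, the fibre $X_s = f^{-1}(s) = \{x_1, \dots, x_r\}$ is finite, and as $X$ is Hausdorff I may pick pairwise disjoint open neighbourhoods $U_i \ni x_i$. Because $f$ is closed, $f(X \setminus \bigsqcup_i U_i)$ is closed and avoids $s$, so its complement is an open neighbourhood $W \ni s$ over which $f^{-1}(W) = \bigsqcup_i (U_i \cap f^{-1}(W))$ splits as a disjoint union of opens, each meeting $X_s$ in exactly one point. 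Each restricted map is again finite, $f_*\F|_W$ decomposes as the corresponding direct sum, and I am reduced to the case $X_s = \{x\}$.

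By Proposition~\ref{morphisme fini anneaux locaux}, one has $(f_*\F)_s \cong \prod_{x' \in X_s} \F_{x'}$ for every $\O_X$-module $\F$; since taking stalks and forming finite products are exact, $f_*$ is an exact functor on $\O_X$-modules that commutes with finite direct sums. Suppose for the moment that $f_*\O_X$ is coherent near $s$. As $\F$ is coherent it admits, near $x$, a presentation $\O_X^p \to \O_X^q \to \F \to 0$; shrinking $W$ (again using that $f$ is closed with $X_s = \{x\}$, so that $f^{-1}(W)$ lies in any prescribed neighbourhood of $x$) I may assume this holds on all of $f^{-1}(W)$. Applying the exact functor $f_*$ gives, on $W$, an exact sequence $(f_*\O_X)^p \to (f_*\O_X)^q \to f_*\F \to 0$. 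Thus $f_*\F$ is the cokernel of a morphism between coherent $\O_S$-modules, hence coherent, the coherent modules over the coherent ring sheaf $\O_S$ being stable under cokernels.

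It therefore remains to treat $\F = \O_X$, which is the genuine content of the theorem and the main obstacle. Here Proposition~\ref{morphisme fini anneaux locaux} gives $(f_*\O_X)_s \cong \O_x$, so I must show that $\O_x$ is a finitely presented $\O_s$-module and that this presentation propagates to a neighbourhood of $s$. To this end I would pass to a local model: near $x$, the space $X$ embeds as a closed analytic subset of some $\Aff^n_S$, and the crucial point is that finiteness of $f$ forces each coordinate $T_j$ to be integral over $\O_s$ along $X$. Concretely, the fibre over $s$ being reduced to the single point $x$, each $T_j$ takes finitely many values along the fibre, which should yield a monic polynomial $P_j(T_j) \in \O_s[T_j]$ vanishing on $X$ near $x$. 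A Weierstrass-type division by the $P_j$ would then exhibit $\O_x$ as generated, as an $\O_s$-module, by the finitely many monomials $T_1^{a_1}\cdots T_n^{a_n}$ with $a_j < \deg P_j$, the relations among these generators being controlled in the same way to give finite presentation; uniformity over a neighbourhood of $s$ follows since the $P_j$ are defined there. This Weierstrass input is the analytic heart of the statement and the hardest step; once it is in hand, everything else is the formal sheaf theory above, resting on the stalk formula~\ref{morphisme fini anneaux locaux} and on Oka's coherence theorem~\ref{lemme d'Oka}.
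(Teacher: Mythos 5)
The paper does not prove this theorem at all: it is imported as a citation from \cite[Théorème~5.2.6]{lemanissier_espaces_2020}, so there is no internal proof to compare yours against. Judged on its own merits, your reduction steps are correct and follow the classical architecture (Grauert--Remmert in the complex case, and the cited source in general): separating the points of the finite fibre by disjoint opens, shrinking the base using closedness of $f$, observing that the stalk formula of la proposition \ref{morphisme fini anneaux locaux} makes $f_*$ exact and compatible with finite sums, and bootstrapping from $\F = \O_X$ to a general coherent $\F$ by pushing forward a finite presentation and invoking the stability of coherent sheaves under cokernels together with le théorème \ref{lemme d'Oka}. All of that is sound.

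The gap is that the entire content of the theorem is concentrated in the step you leave as a sketch, and the sketch as written contains a circularity. You propose to extract a monic polynomial $P_j(T_j) \in \O_s[T_j]$ vanishing on $X$ near $x$ from the fact that $T_j$ takes finitely many values along the fibre. But this only produces a monic polynomial with coefficients in $\H(s)$ killing $T_j$ on $X_s$; lifting it to a monic polynomial over $\O_s$ that annihilates $T_j$ on a whole neighbourhood is usually done by applying Cayley--Hamilton to multiplication by $T_j$ on the finite $\O_s$-module $\O_x$ --- that is, it presupposes exactly the finiteness you are trying to establish. The non-circular route is an induction on $n$ in which Weierstrass \emph{preparation} is applied to a defining function of $X$ whose restriction to the last coordinate axis of the fibre has an isolated zero of finite order, yielding a distinguished polynomial, followed by Weierstrass \emph{division} to bound the module generators and relations. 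Over a general geometric base ring, preparation and division are themselves substantial theorems: they are precisely what the norm estimates in the strong regularity condition of la définition \ref{def anneaux de base géométriques} are designed to enable, and their proof occupies a large part of \cite{lemanissier_espaces_2020}. Your proposal correctly locates the difficulty but does not resolve it; the phrases \og should yield \fg{} and \og would then exhibit \fg{} cover the theorem's hard core.
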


\begin{cor}\label{anneaux locaux type fini cor}
Soient $f : X \rightarrow S$ un morphisme d'espaces $\A$-analytiques, $x \in X$ et $s = f(x)$. Si $f$ est fini en $x$ alors $\O_x$ est un $\O_s$-module de présentation finie.
\end{cor}

\begin{proof}
On applique le théorème \ref{thm application finie} au faisceau structural $\O_X$ et on en déduit que $\O_x$ est de type fini sur $\O_s$. De plus, $\O_s$ étant noethérien, on en déduit que $\O_x$ est de présentation finie.
\end{proof}

\begin{thm}[{Nullstellensatz de Rückert, \cite[Théorème~5.5.5]{lemanissier_espaces_2020}}]\label{nullstellensatz thm}
Soient $X$ un espace $\A$-analytique, $\F$ un faisceau cohérent sur $X$ et $f \in \O(X)$. On suppose que $f(x) = 0$ pour tout $x \in \Supp{\F}$. Alors, pour tout $x \in X$, il existe $n \in \N$ tel que $f^n \F_x = 0$.
\end{thm}

Même si la plupart des résultats de cet article traitent des espaces analytiques sur un anneau de base géométrique quelconque, certaines propriétés d'analytification ne sont établies que sur une classe plus restreinte d'anneaux que l'on définit maintenant.

\begin{df}\label{def anneau de Dedekind analytique}
Un anneau de base géométrique $\A$ est un \emph{anneau de Dedekind analytique} si :
\begin{itemize}
\item $\A$ est un anneau de Dedekind,
\item le morphisme $\rho : \M(\A) \rightarrow \Spec(\A)$ est surjectif,
\item si $\xi \in \Spec(\A)$ est tel que $\O_\xi$ soit un corps alors, pour tout $x \in \rho^{-1}(\xi)$, $\O_x$ est un corps,
\item si $\xi \in \Spec(\A)$ est tel que $\O_\xi$ soit un anneau de valuation discrète alors, pour tout $x \in \rho^{-1}(\xi)$, $\O_x$ est un anneau de valuation discrète et $\rho^\sharp_x : \O_\xi \rightarrow \O_x$ est le morphisme de complétion.
\end{itemize}
\end{df}

\begin{ex}
Les exemples \ref{exemples classiques} sont tous des anneaux de Dedekind analytiques.
\end{ex}

\begin{thm}[{\cite[Théorème~6.6.4]{lemanissier_espaces_2020}}]\label{analytification plat Dedekind analytique thm}
On suppose que $\A$ est un anneau de Dedekind analytique. Soit $\Xsch$ un $\A$-schéma localement de présentation finie. Alors le morphisme d'analytification $\rho : \Xsch^\an \rightarrow \Xsch$ est plat.
\end{thm}

\begin{cor}\label{plat désanalytification Dedekind analytique cor}
On suppose que $\A$ est un anneau de Dedekind analytique. Soient $f : \Xsch \rightarrow \Ssch$ un morphisme entre $\A$-schémas localement de présentation finie et $x \in \Xsch^\an$. Si $f^\an$ est plat en $x$ alors $f$ est plat en $\rho(x)$.
\end{cor}

\begin{proof}
On note $\xi = \rho(x)$, $s = f^\an(x)$ et $\sigma = \rho(s) = f(\xi)$. D'après le théorème \ref{analytification plat Dedekind analytique thm}, les morphismes $\O_\xi \rightarrow \O_x$ et $\O_\sigma \rightarrow \O_s$ sont plats. En particulier, $\O_x$ est fidèlement plat sur $\O_s$ et $\O_\xi$ et on conclut par \cite[\href{https://stacks.math.columbia.edu/tag/039V}{Tag 039V}]{stacks_project_authors_stacks_2021}.
\end{proof}

\section{Morphismes rigides épais}\label{section rig épais}

Soit $\A$ un anneau de base géométrique.
\smallbreak
Dans cette section, on définit les morphismes d'espaces $\A$-analytiques rigides épais et on démontre certaines de leurs propriétés, notamment la relation entre $\m_x$ et $\m_{X_s,x}$ énoncée en introduction. Cette étude est motivée par le fait suivant : si un morphisme $f$ est non ramifié en un point $x$ au sens de la définition \ref{def non ramifié} alors il est rigide épais en $x$.

\begin{df}
Soient $f : X \rightarrow S$ un morphisme d'espaces $\A$-analytiques et $x \in X$. On dit que $f$ est \emph{rigide épais en $x$} si $\kappa(x)$ est une extension finie de $\kappa(f(x))$.
\end{df}

\begin{rem}
On écrira \og $x$ est rigide épais au-dessus de $f(x)$ \fg{} ou simplement \og $x$ est rigide épais \fg{} sans préciser le morphisme lorsque le contexte le permettra. 
\end{rem}

\begin{df}
Si $S$ est un espace $\A$-analytique, on appellera \emph{espace $S$-analytique} tout espace $\A$-analytique muni d'un morphisme vers $S$, appelé \emph{projection sur $S$}. Si $X$ et $Y$ sont deux espaces $S$-analytiques, un \emph{morphisme d'espaces $S$-analytiques} $X \rightarrow Y$ est un morphisme d'espaces $\A$-analytiques qui commute aux projections vers $S$. 
\smallbreak
Si $n$ est un entier, on appellera \emph{espace affine $S$-analytique de dimension $n$} et on notera $\Aff^n_S$ le produit fibré $\Aff^n_\A \times_\A S$ et $\pi_S : \Aff^n_S \rightarrow S$ la projection sur $S$. On appellera alors \emph{coordonnées de $\Aff^n_S$} les relevées des coordonnées de $\Aff^n_\A$ par la projection $\Aff^n_S \rightarrow \Aff^n_\A$. Si $U$ est un sous-espace $\A$-analytique de $\Aff^n_S$ muni de la projection vers $S$ induite par $\pi_S$, on dira que c'est un \emph{modèle local d'espace $S$-analytique}.
\end{df}

\begin{nota}
Si $\Ssch$ est un schéma et $n \in \N$, on notera encore $\Aff^n_\Ssch$ l'espace affine schématique de dimension $n$ au-dessus de $\Ssch$. Afin d'alléger l'écriture, $\Aff^n_{\Spec(\A)}$ sera noté $\Aff^{n,\sch}_\A$.
\end{nota}

\begin{lem}\label{espace sur S localement modèle local lem}
Soit $f : X \rightarrow S$ un morphisme d'espaces $\A$-analytiques. Alors $X$ admet un recouvrement par des modèles locaux d'espace $S$-analytique et l'inclusion d'un tel modèle local dans $X$ est un morphisme d'espaces $S$-analytiques.
\end{lem}

\begin{proof}
Soient $U \subset X$ un modèle local d'espace $\A$-analytique, $n \in \N$ et $\iota : U \hookrightarrow \Aff^n_\A$ une immersion. D'après la proposition \ref{graphe immersion prop}, le graphe $\Gamma_f : U \hookrightarrow U \times_\A S$ de $f|_U$ est une immersion. En la composant avec $\iota \times \mathrm{Id}_{S} : U \times_\A S \hookrightarrow \Aff^n_S$, on obtient un diagramme commutatif :
\begin{center}
\begin{tikzcd}
U \arrow[rr, "\Gamma", hookrightarrow] \arrow[dr, "f"] & & \Aff^n_S \arrow[dl, "\pi_S"] \\
& S &
\end{tikzcd}
\end{center}
où $\Gamma$ désigne $(\iota \times \mathrm{Id}_S) \circ \Gamma_{f} : U \hookrightarrow \Aff^n_S$. Comme $\Gamma$ est une immersion, on en déduit que $U$ est un modèle local d'espace $S$-analytique. On peut effectuer ce raisonnement pour un recouvrement de $X$ par des modèles locaux d'espace $\A$-analytique et on en déduit le résultat.
\end{proof}

\begin{lem}\label{point 0 anneau local lem}
Soient $S$ un espace $\A$-analytique, $s \in S$, $n \in \N$ et $0_s \in \Aff^n_S$ le point 0 de la fibre au-dessus de $s$. On note $T_1, \dots, T_n$ les coordonnées de $\Aff^n_S$. On a alors un monomorphisme naturel \centerrightarrow{\O_s[T_1, \dots, T_n]}{\O_{0_s}} induisant un isomorphisme entre les complétés $(T_1, \dots, T_n)$-adiques.
\smallbreak
En particulier, on a $\O_{0_s} \subset \O_s[\![T_1, \dots, T_n]\!]$.
\end{lem}

\begin{proof}
Une récurrence simple permet de se ramener au cas $n=1$. La propriété étant locale, on peut supposer que $S$ est un modèle local d'espace $\A$-analytique et on dispose alors de $m \in \N$ et d'un faisceau cohérent d'idéaux $\I \subset \O_{\Aff^m_\A}$ vérifiant $S \cong \Supp{\O_{\Aff^m_\A}/\I}$. On a alors $\Aff^1_S \cong \Supp{\O_{\Aff^{m+1}_\A}/(\pi_m)^*\I}$ où $\pi_m : \Aff^{m+1}_\A \rightarrow \Aff^m_\A$ désigne la projection sur les $m$ premières coordonnées. On note $s'$ l'image de $s$ dans $\Aff^m_\A$, $0_{s'}$ le point 0 de la fibre de $\Aff^{m+1}_\A$ au-dessus de $s'$ et $T$ la dernière coordonnée de $\Aff^{m+1}_\A$.
\smallbreak
On cherche à présent à décrire $\O_{0_{s'}}$. On pose $V \subset \Aff^m_\A$ un voisinage compact spectralement convexe de $s'$ et on se restreint à $\Aff^1_{\B(V)}$. Si $W \subset \M(\B(V))$ est une partie compacte et $t > 0$, on note $\B(W)\langle |T|\leq t \rangle$ l'algèbre des séries $\Sigma a_n T^n$ où $(a_n) \in \B(W)^\N$ est telle que $\sum \| a_n \|_W t^n$ converge. D'après \cite[Corollaire~2.6]{poineau_espaces_2013}, le morphisme naturel \centerrightarrow{\varinjlim\limits_{W \ni s', t>0} \B(W)\langle \mid T \mid \leq t \rangle}{\O_{0_{s'}},} où $W$ parcourt les voisinages compacts de $s'$ dans $\M(\B(V))$, est un isomorphisme. De plus, on a pour tout $n \in \N$ : \[ \quotient{\B(W) \langle \mid T \mid \leq t \rangle}{(T^n)} \cong \quotient{\B(W)[T]}{(T^n)}. \] Or, $\O_{s'}$ étant noethérien, $\I_{s'} \subset \O_{s'}$ est de type fini et l'exactitude à droite de $\varinjlim$ et de la complétion $T$-adique $A \mapsto \widehat{A}$ des anneaux noethériens permet donc d'écrire :
\begin{align*}
\reallywidehat{\O_{0_s}} & \cong \reallywidehat{\quotient{\O_{0_{s'}}}{\I_{s'}}}\cong \quotient{\reallywidehat{\O_{0_{s'}}}}{\I_{s'}}\\
&\cong \quotient{\left( \varprojlim_n \quotient{ \left( \varinjlim_{W \ni s', t>0} \B(W)\langle \mid T \mid \leq t \rangle \right)}{T^n} \right)}{\I_{s'}}\\
&\cong \quotient{\left( \varprojlim_n \varinjlim_{W \ni s', t>0} \left( \quotient{ \B(W)\langle \mid T \mid \leq t \rangle }{T^n} \right) \right)}{\I_{s'}}\\
&\cong \quotient{\left( \varprojlim_n \varinjlim_{W \ni s', t>0} \left( \quotient{ \B(W)[T] }{T^n} \right) \right)}{\I_{s'}}\\
&\cong \quotient{\left( \varprojlim_n \quotient{ \left( \varinjlim_{W \ni s', t>0} \B(W)[T] \right)}{T^n} \right)}{\I_{s'}}\\
&\cong \quotient{\left( \varprojlim_n \quotient{\O_{s'}[T]}{T^n} \right)}{\I_{s'}}\\
&\cong \quotient{\reallywidehat{\O_{s'}[T]}}{\I_{s'}} \cong \reallywidehat{\quotient{\O_{s'}[T]}{\I_{s'}}}\\
&\cong \reallywidehat{\O_s[T]} \cong \O_s[\![T]\!].\\
\end{align*}
\smallbreak
L'anneau $\O_s[\![T_1, \dots, T_n]\!]$ étant le complété $(T_1, \dots, T_n)$-adique de l'anneau noethérien $\O_{0_s}$, on en conclut bien l'inclusion $\O_{0_s} \subset \O_s[\![T_1, \dots, T_n]\!]$.
\end{proof}

\begin{lem}\label{rig épais pt 0 lem}
Soient $S$ un espace $\A$-analytique, $n \in \N$, $x \in \Aff^n_S$, $s = \pi_S(x)$ et $0_s$ le point $0$ de la fibre au-dessus de $s$. On suppose que $\Aff^n_S \rightarrow S$ est rigide épais en $x$. On dispose alors de polynômes $P_1 \in \O_{0_s}[S_1], P_2 \in \O_{0_s}[S_1, S_2], \dots, P_n \in \O_{0_s}[S_1, \dots, S_n]$ tels que, en notant $T_1, \dots, T_n$ les coordonnées de $\Aff^n_S$, on ait un isomorphisme : \[ \O_x \cong \quotient{\O_{0_s}[S_1, \dots, S_n]}{(P_1(S_1)-T_1, \dots, P_n(S_1, \dots, S_n)-T_n)}. \]
Cet isomorphisme reste vérifié dans la fibre au-dessus de $s$.
\end{lem}

\begin{proof}
On montre la propriété par récurrence sur $n$.
\smallbreak
Si $n = 0$ alors $\Aff^n_S = S$, $\pi_S = \Id_S$ et donc $x = 0_s$. On a donc bien $\O_x \cong \O_{0_s}$.
\smallbreak
On suppose maintenant la propriété vérifiée pour un certain $n \in \N$ et on la montre pour $n+1$. Soient $x \in \Aff^{n+1}_S$ et $x_1 \in \Aff^1_S$ sa projection sur la première coordonnée $T_1$. Soient $m \in \N$, $U \subset S$ un voisinage ouvert de $s$, $\iota : U \hookrightarrow \Aff^m_\A$ une immersion et $\eta = \iota \times_\A \Id_{\Aff^1_\A} : \Aff^1_U \hookrightarrow \Aff^{m+1}_\A$ l'immersion induite sur $\Aff^1_U$ de coordonnée $T_1$. On note $s' = \iota(s)$, $x_1' = \eta(x_1)$, $0_{s'} \in \Aff^{m+n+1}_\A$ le point 0 de la fibre au-dessus de $s'$ et $0_{x_1'} \in \Aff^{m+n+1}_\A$ le point 0 de la fibre au-dessus de $x_1'$. Alors $x_1'$ est rigide épais au-dessus de $s'$ et on note $P_1 \in \kappa(s')[S_1]$ son polynôme minimal. On commence par montrer l'isomorphisme :
\begin{equation}\label{dimension 1}
\O_{0_{x_1'}} \cong \quotient{\O_{0_{s'}}[S_1]}{(P_1(S_1)-T_1)}.
\end{equation}
Soit $V \subset \Aff^m_\A$ un voisinage compact spectralement convexe de $s'$ tel que les coefficients de $P_1$ appartiennent à $\B(V)$. La propriété recherchée étant locale, on peut se restreindre à la démontrer sur $\Aff^{n+1}_{\B(V)}$. D'après \cite[Corollaire~8.10]{poineau_espaces_2013}, $P_1$ induit un morphisme $\varphi : \Aff^{n+1}_{\B(V)} \rightarrow \Aff^{n+1}_{\B(V)}$ de sorte que $\varphi^{-1}(0_{s'}) = 0_{x_1'}$ et on a bien l'isomorphisme \eqref{dimension 1}. On dispose à présent d'un voisinage ouvert $W \subset V$ de $s'$ ainsi que d'un faisceau cohérent d'idéaux $\I \subset \O_W$ vérifiant $\O_s \cong \quotient{\O_{s'}}{\I_{s'}}$. De plus, on a $\O_{0_{x_1}} \cong \quotient{\O_{0_{x_1'}}}{\I_{s'}\O_{0_{x_1'}}}$ et $\O_{0_s} \cong \quotient{\O_{0_{s'}}}{\I_{s'}\O_{s'}}$. On obtient donc par un passage au quotient de \eqref{dimension 1} : \[ \O_{0_{x_1}} \cong \quotient{\O_{0_s}[S_1]}{(P_1(S_1)-T_1)}. \]
L'hypothèse de récurrence appliquée au point $x \in \Aff^{n+1}_S$ au-dessus de $x_1 \in \Aff^1_S$ assure l'existence de polynômes $P_2 \in \O_{0_{x_1}}[S_2], P_3 \in \O_{0_{x_1}}[S_2, S_3], \dots, P_{n+1} \in \O_{0_{x_1}}[S_2, \dots, S_{n+1}]$ vérifiant : \[ \O_x \cong \quotient{\O_{0_{x_1}}[S_2, \dots, S_{n+1}]}{(P_2(S_2) - T_2, \dots, P_{n+1}(S_2, \dots, S_{n+1})-T_{n+1})} \]
et on en déduit : \[ \O_x \cong \quotient{\O_{0_s}[S_1, \dots, S_{n+1}]}{(P_1(S_1)-T_1, \dots, P_{n+1}(S_1, \dots, S_{n+1})-T_{n+1})}. \]
Cela conclut la démonstration.
\end{proof}

\begin{prop}\label{rig épais ideal max fibre prop}
Soient $f : X \rightarrow S$ un morphisme d'espaces $\A$-analytiques, $x \in X$ et $s = f(x)$. On suppose que $f$ est rigide épais en $x$. On a alors :
\begin{center}
$\m_{X_s,x} = \sqrt{\m_{x}\O_{X_s,x}}$.
\end{center}
\end{prop}

On commence par démontrer le cas particulier des espaces affines sur $\A$.

\begin{lem}\label{rig épais ideal max fibre affine lem}
Soient $m,n \in \N$, $\pi : \Aff^{m+n}_\A \rightarrow \Aff^{m}_\A$ la projection sur les $m$ premières coordonnées, $y_n \in \Aff^{m+n}_\A$ et $y_0 = \pi(y_n)$. On suppose que $\pi$ est rigide épais en $y_n$. On a alors : \[ \m_{\pi^{-1}(y_0),y_n} = \sqrt{\m_{y_n}\O_{\pi^{-1}(y_0),y_n}}. \]
\end{lem}

\begin{proof}
On fixe l'entier $m$ et on procède par récurrence sur $n$.
\smallbreak
Si $n = 0$ alors $\pi = \mathrm{Id}_{\Aff^m}$, $\O_{\pi^{-1}(y_0),y_n} = \H(y_0)$ et on a donc bien $\m_{\pi^{-1}(y_0),y_n} = 0 = \sqrt{\m_{\Aff^{m}_\A,y_n}\O_{\pi^{-1}(y_0),y_n}}$.
\smallbreak
Supposons à présent la propriété vérifiée pour un certain $n \in \N$ et montrons la pour $n+1$. On note $S$ la dernière coordonnée de $\Aff^{m+n+1}_\A$ ainsi que $\pi_{+1} : \Aff^{m+n+1}_\A \rightarrow \Aff^m_\A$ la projection sur les $m$ premières coordonnées. Soient $y_{n+1} \in \Aff^{m+n+1}_\A$ un point rigide épais au-dessus de $\pi_{+1}(y_{n+1})$ et $y_n \in \Aff^{m+n}_\A$ sa projection sur les $m+n$ premières coordonnées. Soient $P \in \kappa(y_n)[S]$ le polynôme minimal de $y_{n+1}$ et $V \subset \Aff^{m+n}_\A$ un voisinage compact spectralement convexe de $y_n$ tel que les coefficients de $P$ soient définis sur $\B(V)$. La propriété recherchée étant locale, on peut se restreindre à la démontrer sur $\Aff^1_{\B(V)}$. D'après \cite[Théorème~8.8]{poineau_espaces_2013}, $P$ induit un morphisme $\varphi_P : \Aff^1_{\B(V)} \rightarrow \Aff^1_{\B(V)}$ de sorte que $\varphi_P(y_{n+1})$ soit le point $0$ au-dessus de $y_n$, noté $0_{y_n}$, et $\varphi_P$ induit un isomorphisme :
\begin{equation}\label{eq1}
\O_{y_{n+1}} \cong \quotient{\O_{0_{y_n}}[S]}{(P(S)-T)}
\end{equation}
où $T$ désigne la coordonnée de $\Aff^1_{\B(V)}$ au but de $\varphi_P$. D'après le lemme \ref{point 0 anneau local lem}, une fonction $f \in \O_{0_{y_n}}$ s'écrit $f = \sum\limits_{i = 0}^{+\infty} a_i T^i$ où les $a_i$ sont des éléments de $\O_{y_n}$. On a donc $f(0_{y_n}) = 0$ si et seulement si $a_0(y_n) = 0$ et on en déduit que :
\begin{equation}\label{eq2}
\m_{0_{y_n}} = \m_{y_n}\O_{0_{y_n}} + T\O_{0_{y_n}}.
\end{equation}
De même, on a $\m_{\pi_{+1}^{-1}(y_0),0_{y_n}} = \m_{\pi^{-1}(y_0),y_n}\O_{\pi_{+1}^{-1}(y_0),0_{y_n}} + T\O_{\pi_{+1}^{-1}(y_0),0_{y_n}}$. En utilisant l'hypothèse de récurrence, on obtient : \[ \m_{\pi_{+1}^{-1}(y_0),0_{y_n}} = \sqrt{\m_{y_n}\O_{\pi^{-1}(y_0),y_n}}\O_{\pi_{+1}^{-1}(y_0),0_{y_n}} + T\O_{\pi_{+1}^{-1}(y_0),0_{y_n}}. \]
L'idéal $T\O_{\pi_{+1}^{-1}(y_0),0_{y_n}}$ étant radiciel, on peut écrire : \[ \m_{\pi_{+1}^{-1}(y_0),0_{y_n}} \subset \sqrt{\m_{y_n}\O_{\pi_{+1}^{-1}(y_0),0_{y_n}} + T\O_{\pi_{+1}^{-1}(y_0),0_{y_n}}}. \]
Cela donne bien l'inclusion $\m_{\pi_{+1}^{-1}(y_0),0_{y_n}} \subset \sqrt{\m_{0_{y_n}}\O_{\pi_{+1}^{-1}(y_0),0_{y_n}}}$ d'après \eqref{eq2}. L'inclusion réciproque étant triviale, l'égalité :
\begin{equation}\label{cas point 0}
m_{\pi_{+1}^{-1}(y_0),0_{y_n}} = \sqrt{\m_{0_{y_n}}\O_{\pi_{+1}^{-1}(y_0),0_{y_n}}}
\end{equation} en découle.
\smallbreak
On remarque à présent que, sous l'isomorphisme \eqref{eq1}, l'idéal maximal de $\O_{y_{n+1}}$ est engendré par $\m_{0_{y_n}}$. En effet, on a :
\begin{equation*}
\begin{split}
\quotient{\left( \O_{0_{y_n}}[S]/(P(S)-T) \right)}{(\m_{0_{y_n}})} & \cong \quotient{\kappa(0_{y_n})[S]}{(P(S)-T)} \\
& \cong \quotient{\kappa(y_n)[S]}{(P(S))} \\
& \cong \kappa(y_{n+1}).
\end{split}
\end{equation*}
On en déduit que la préimage de l'idéal $\sqrt{\m_{y_{n+1}} \O_{\pi_{+1}^{-1}(y_0),y_{n+1}}}$ par la surjection $\O_{\pi_{+1}^{-1}(y_0),0_{y_n}}[S] \twoheadrightarrow \O_{\pi_{+1}^{-1}(y_0),y_{n+1}}$ est \[\m = \sqrt{\m_{0_{y_n}} \O_{\pi_{+1}^{-1}(y_0),0_{y_n}}[S] + (P(S)-T)\O_{\pi_{+1}^{-1}(y_0),0_{y_n}}[S]}.\] De plus, comme $0_{y_n}$ est rigide dans sa fibre, le corps résiduel de l'anneau $\O_{\pi_{+1}^{-1}(y_0),0_{y_n}}$ vaut $\H(0_{y_n})$ et on obtient :
\begin{equation*}
\begin{split}
\quotient{\O_{\pi_{+1}^{-1}(y_0),y_{n+1}}}{\sqrt{\m_{y_{n+1}}\O_{\pi_{+1}^{-1}(y_0),y_{n+1}}}} & \cong \quotient{\O_{\pi_{+1}^{-1}(y_0),0_{y_n}}[S]}{\m} \\
& \cong \quotient{\H(0_{y_n})[S]}{\sqrt{P(S)-T}}
\end{split}
\end{equation*}
d'après \ref{cas point 0}.
\smallbreak
En écrivant $\H(0_{y_n})$ comme le quotient de $\H(y_n)[T]$ par $(T)$, on déduit : \[ \quotient{\O_{\pi_{+1}^{-1}(y_0),y_{n+1}}}{\sqrt{\m_{y_{n+1}}\O_{\pi_{+1}^{-1}(y_0),y_{n+1}}}} \cong \quotient{\H(y_n)[S]}{\sqrt{P(S)}}. \] Or, d'après \cite[Lemme~1.6.23]{lemanissier_espaces_2020}, l'idéal $\sqrt{P(S)} \subset \H(y_n)[S]$ est engendré par $\mu_{y_{n+1}}(S)$, polynôme minimal de $y_{n+1}$ à coefficients dans $\H(y_n)$. On a donc à présent : \[ \quotient{\O_{\pi_{+1}^{-1}(y_0),y_{n+1}}}{\sqrt{\m_{y_{n+1}}\O_{\pi_{+1}^{-1}(y_0),y_{n+1}}}} \cong \quotient{\H(y_n)[S]}{(\mu_{y_{n+1}}(S))} \cong \H(y_{n+1}). \] On en déduit que $\sqrt{\m_{y_{n+1}}\O_{\pi_{+1}^{-1}(y_0),y_{n+1}}}$ est bien maximal, ce qui conclut la récurrence.
\end{proof}

\begin{proof}[Démonstration de la proposition \ref{rig épais ideal max fibre prop}]
La propriété énoncée étant locale, on peut supposer que $S$ est un modèle local d'espace $\A$-analytique et on dispose de $m \in \N$ et $\iota : S \hookrightarrow \Aff^m_\A$ une immersion. Alors la propriété énoncée est vérifiée pour $f$ si et seulement si elle l'est pour $\iota \circ f$ et on se ramène au cas où $S = \Aff^m_\A$.
\smallbreak
D'après le lemme \ref{espace sur S localement modèle local lem} et quitte à restreindre $X$, on peut supposer que $X$ est un modèle local d'espace $S$-analytique. On dispose à présent de $n \in \N$ et de $\Gamma : X \hookrightarrow \Aff^n_S \cong \Aff^{m+n}_\A$ une immersion d'espaces $S$-analytiques :
\begin{center}
\begin{tikzcd}
X \arrow[rr, "\Gamma", hookrightarrow] \arrow[dr, "f"] & & \Aff^{m+n}_\A \arrow[dl, "\pi_S"] \\
& \Aff^m_\A &
\end{tikzcd}.
\end{center}
D'après le lemme \ref{rig épais ideal max fibre affine lem}, on a $\m_{\pi_S^{-1}(s),\Gamma(x)} = \sqrt{\m_{\Aff^{m+n}_\A,\Gamma(x)}\O_{\pi_S^{-1}(s),\Gamma(x)}}$. Soient $U \subset \Aff^{m+n}_\A$ un voisinage ouvert de $\Gamma(x)$ et $\I \subset \O_U$ un faisceau cohérent d'idéaux vérifiant $X = \Supp{\O_U/\I}$. On a alors :
\begin{align*}
\m_{X_s,x} &= \quotient{\m_{\pi_S^{-1}(s),\Gamma(x)}}{\I_{\Gamma(x)}}\\
&= \quotient{\sqrt{\m_{\Aff^{m+n}_\A,\Gamma(x)}\O_{\pi_S^{-1}(s),\Gamma(x)}}}{\I_{\Gamma(x)}}\\
&= \sqrt{\quotient{\m_{\Aff^{m+n}_\A,\Gamma(x)}\O_{\pi_S^{-1}(s),\Gamma(x)}}{\I_{\Gamma(x)}}}\\
&= \sqrt{\m_{X,x} \O_{X_s,x}}.
\end{align*}
\end{proof}

\begin{rem}
Si les polynômes $P(S)$ et $\mu_{y_{n+1}}(S)$ engendraient le même idéal de $\H(y_n)[S]$, la preuve du lemme \ref{rig épais ideal max fibre affine lem} pourrait être écrite sans les radicaux. C'est le cas lorsque $\kappa(x)$ est une extension séparable de $\kappa(s)$ par \cite[Proposition 2.4.1]{berkovich_etale_1993}. Sous cette hypothèse, on a donc $\m_{X_s,x} = \m_x\O_{X_s,x}$.
\end{rem}

\begin{lem}\label{rig épais platitude quotient fibre lem}
Soient $S$ un espace $\A$-analytique, $f : X \rightarrow Y$ un morphisme d'espaces $S$-analytiques, $x \in X$, $y = f(x)$ et $s \in S$ l'image de $x$. Soit $I \subset \O_y$ un idéal. On suppose que $f$ est rigide épais en $x$. Si $\O_y/I\O_y \rightarrow \O_{Y_s,y}/I\O_{Y_s,y}$ est plat alors $\O_x/I\O_x \rightarrow \O_{X_s,x}/I\O_{X_s,x}$ est plat.
\end{lem}

\begin{proof}
Cette propriété étant locale, on peut supposer que $X$ est un modèle local d'espace $Y$-analytique et on dispose alors de $n \in \N$ et d'une immersion d'espaces $Y$-analytiques $\Gamma : X \hookrightarrow \Aff^n_Y$. Comme $\Gamma$ est une immersion, il suffit de montrer la propriété pour $\pi_Y : \Aff^n_Y \rightarrow Y$. On suppose donc $X = \Aff^n_Y$ et $f = \pi_Y$.
\smallbreak
On note $0_y$ le point 0 de la fibre de $X$ au-dessus de $y$. D'après le lemme \ref{rig épais pt 0 lem}, on dispose de polynômes $P_1 \in \O_{0_y}[S_1], P_2 \in \O_{0_y}[S_1, S_2], \dots, P_n \in \O_{0_y}[S_1, \dots, S_n]$ tels que, en notant $T_1, \dots, T_n$ les coordonnées de $X = \Aff^n_Y$, on ait un isomorphisme :
\begin{equation*}
\O_{x} \cong \quotient{\O_{0_y}[S_1, \dots, S_n]}{(P_1(S_1)-T_1, \dots, P_n(S_1, \dots, S_n)-T_n)}.
\end{equation*}
Il suffit de donc de montrer le résultat dans le cas où $x = 0_y$.
\smallbreak
D'après le critère local de platitude \cite[Exposé~IV, Proposition~5.6]{grothendieck_revetements_2003}, le morphisme \centerrightarrow{\quotient{\O_x}{I\O_x}}{\quotient{\O_{X_s,x}}{I\O_{X_s,x}}} est plat si et seulement si le morphisme induit \centerrightarrow{\quotient{\O_x}{\left( I+(T_1, \dots, T_n)^l \right)\O_x}}{\quotient{\O_{X_s,x}}{\left( I+(T_1, \dots, T_n)^l \right)\O_{X_s,x}}} est plat pour tout $l \geq 1$. Or, d'après le lemme \ref{point 0 anneau local lem}, ce dernier est égal au morphisme \centerrightarrow{\quotient{\left( \quotient{\O_y}{I\O_y} \right)[T_1,\dots,T_n]}{(T_1, \dots, T_n)^l}}{\quotient{\left( \quotient{\O_{Y_s,y}}{I\O_{Y_s,y}} \right)[T_1, \dots, T_n]}{(T_1, \dots, T_n)^l}} induit par $\O_y/I\O_y \rightarrow \O_{Y_s,y}/I\O_{Y_s,y}$ qui est plat par hypothèse. Ceci conclut la démonstration.
\end{proof}

\begin{df}
Soient $f : X \rightarrow S$ un morphisme d'espaces $\A$-analytiques, $x \in X$ et $s = f(x)$. On dit que $f$ est \emph{purement localement transcendant} en $x$ si $\m_x = \m_s\O_x$.
\end{df}

\begin{rem}
Cette définition coïncide avec \cite[Définition~9.9]{poineau_espaces_2013} dans le cas du morphisme $\Aff^n_\A \rightarrow \M(\A)$ d'après \cite[Théorème~1.6.26]{lemanissier_espaces_2020}.
\end{rem}

\begin{prop}\label{rig épais corps sur les fibres prop}
Soient $f : X \rightarrow S$ un morphisme d'espaces $\A$-analytiques, $x \in X$ et $s = f(x)$. On suppose que $f$ est rigide épais en $x$. Alors l'anneau $\O_{X_s,x}$ est un corps si et seulement si $f$ est purement localement transcendant en $x$.
\end{prop}

\begin{proof}
D'après le lemme \ref{rig épais platitude quotient fibre lem} appliqué à $Y = S$ et $I = \m_s$, le morphisme $\O_x/\m_s\O_x \rightarrow \O_{X_s, x}$ est plat et
\begin{center}
\begin{tikzcd}
\O_{X_s,x} \bigotimes\limits_{\O_x/\m_s\O_x} \m_x/\m_s \ar[r, hook] & \O_{X_s,x} \bigotimes\limits_{\O_x/\m_s\O_x} \O_x/\m_s\O_x \cong \O_{X_s,x}
\end{tikzcd}
\end{center}
est donc une injection. Son image coïncide avec $(\m_x/\m_s) \O_{X_s,x} \cong \m_x \O_{X_s,x}$. Donc, si $\O_{X_s,x}$ est un corps, $\O_{X_s,x} \bigotimes\limits_{\O_x/\m_s\O_x} \m_x/\m_s = 0$ et donc $\m_x/\m_s = 0$ par fidèle platitude. Dans ce cas, $\O_x/\m_s\O_x$ est bien un corps. Réciproquement, si $\O_x/\m_s \O_x$ est un corps, alors $\m_x \O_{X_s,x} = 0$ et $\O_{X_s,x}$ est un corps d'après \ref{rig épais ideal max fibre prop}.
\end{proof}

\begin{rem}
Le théorème \ref{plat fibre algébrique thm} montre que le sens direct de la proposition \ref{rig épais corps sur les fibres prop} est en fait vérifié pour tout morphisme.
\end{rem}

\section{Morphismes plats}

Cette section est consacrée à la démonstration ainsi qu'aux corollaires du théorème \ref{plat fibre algébrique thm}. Elle se fonde sur des résultats de la section~3.
\smallbreak
Commençons par rappeler la définition de morphisme plat entre espaces $\A$-analytiques.

\begin{df}
Un morphisme d'espaces $\A$-analytiques $f : X \rightarrow S$ est \emph{plat} en $x \in X$ si $f_x^\sharp : \O_s \rightarrow \O_x$ est plat avec $s = f(x)$.
\smallskip
Un morphisme $f : X \rightarrow S$ est \emph{plat} s'il l'est en tout point de $X$.
\end{df}

\begin{lem}\label{décomposition rig épais purement trans lem}
Soient $f : X \rightarrow S$ un morphisme d'espaces $\A$-analytiques, $x \in X$ et $s = f(x)$. On dispose de voisinages $U \subset X$ de $x$ et $V \subset S$ de $s$ et de $n \in \N$ tels que $f(U) \subset V$, $f|_U$ se factorise par $\pi_V : \Aff^n_V \rightarrow V$ et, en notant $y \in \Aff^n_V$ l'image de $x$, $\pi_V$ est purement localement transcendant en $y$ et $U \rightarrow \Aff^n_V$ est rigide épais en $x$.
\end{lem}

\begin{proof}
D'après le lemme \ref{espace sur S localement modèle local lem}, on dispose de $V \subset S$ un voisinage de $s$ qui est un modèle local d'espace $\A$-analytique et $U \subset f^{-1}(V)$ un voisinage de $x$ qui est un modèle local d'espace $V$-analytique. On dispose alors de $m, l \in \N$ et d'immersions $V \hookrightarrow \Aff^m_\A$ et $U \hookrightarrow \Aff^{m+l}_\A$. On note $s'$ (resp. $x'$) l'image de $s$ (resp. $x$) dans $\Aff^m_\A$ (resp. $\Aff^{m+l}_\A$). D'après \cite[Remarque~1.6.20]{lemanissier_espaces_2020} et quitte à permuter les $l$ dernières coordonnées de $\Aff^{m+l}_\A$, on dispose de $n \leq l$ tel que la projection $y'$ de $x'$ dans $\Aff^{m+n}_\A$ soit purement localement transcendante au-dessus de $s'$ et que $x'$ soit rigide épais au-dessus de $y'$. On note $y \in \Aff^n_S$ l'unique point d'image $y' \in \Aff^{m+n}_\A$. Alors $y$ est purement localement transcendant au-dessus de $s$ et $x$ est rigide épais au-dessus de $y$.
\end{proof}

\begin{thm}\label{plat fibre algébrique thm}
Soient $f : X \rightarrow S$ un morphisme d'espaces $\A$-analytiques, $x \in X$ et $s = f(x)$. Alors le morphisme \centerrightarrow{\quotient{\O_x}{\m_s\O_x}}{\O_{X_s,x}} est plat.
\end{thm}

\begin{proof}
D'après le lemme \ref{décomposition rig épais purement trans lem} et quitte à restreindre $X$ et $S$, on dispose de $n \in \N$ tel que $f$ se factorise par $\pi_S : \Aff^n_S \rightarrow S$ et, en notant $y \in \Aff^n_S$ l'image de $x$, $y$ est purement localement transcendant au-dessus de $s$ et $x$ est rigide épais au-dessus de $y$. On en déduit que $\O_y/\m_s\O_y$ est un corps et le morphisme $\O_y/\m_s\O_y \rightarrow \O_{Y_s,y}$ est donc plat. Comme $x$ est rigide épais au-dessus de $y$, on peut appliquer le lemme \ref{rig épais platitude quotient fibre lem} et on en conclut que le morphisme $\O_x/\m_s\O_x \rightarrow \O_{X_s,x}$ est plat.
\end{proof}

\begin{cor}[Critère de platitude par fibres]\label{plat par fibres cor}
Soient $S$ un espace $\A$-analytique, $f : X \rightarrow Y$ un morphisme d'espaces $S$-analytiques, $x \in X$, $y = f(x)$ et $s \in S$ l'image de $x$. On suppose que $X \rightarrow S$ est plat en $x$. Si $f_s : X_s \rightarrow Y_s$ est plat en $x$ alors $f : X \rightarrow Y$ est plat en $x$ et $Y \rightarrow S$ est plat en $y$.
\end{cor}

\begin{proof}
D'après le théorème \ref{plat fibre algébrique thm}, les morphismes $\O_x/\m_s \rightarrow \O_{X_s,x}$ et $\O_y/\m_s \rightarrow \O_{Y_s,y}$ sont plats. En composant ce dernier par le morphisme $\O_{Y_s,y} \rightarrow \O_{X_s,x}$, plat par hypothèse, on en déduit que $\O_{X_s,x}$ est plat sur $\O_y/\m_s$. Ces morphismes étant locaux, ils sont fidèlement plats. D'après \cite[\href{https://stacks.math.columbia.edu/tag/039V}{Tag 039V}]{stacks_project_authors_stacks_2021}, on en conclut que $\O_y/\m_s \rightarrow \O_x/\m_s$ est plat et donc, d'après \cite[\href{https://stacks.math.columbia.edu/tag/00MP}{Tag 00MP}]{stacks_project_authors_stacks_2021}, $\O_s \rightarrow \O_y$ et $\O_y \rightarrow \O_x$ sont plats. Ceci conclut la démonstration.
\end{proof}

\begin{cor}\label{plat analytification cor}
Soient $\Ssch$, $\Xsch$ et $\Ysch$ des $\A$-schémas localement de présentation finie, $f : \Xsch \rightarrow \Ysch$ un morphisme de $\A$-schémas au-dessus de $\Ssch$ et $S$ un espace $\A$-analytique au-dessus de $\Ssch^\an$. On note $\Xsch_S = \Xsch^\an \times_{\Ssch^\an} S$ et $\Ysch_S = \Ysch^\an \times_{\Ssch^\an} S$ et on suppose que $\Xsch_S \rightarrow S$ est plat en un point $\widetilde{x} \in \Xsch_S$. On pose $x \in \Xsch^\an$ et $\widetilde{y} \in \Ysch_S$ les images de $\widetilde{x}$. Si $f : \Xsch \rightarrow \Ysch$ est plat en $\rho(x)$ alors $f_S : \Xsch_S \rightarrow \Ysch_S$ est plat en $\widetilde{x}$ et $\Ysch_S \rightarrow S$ est plat en $\widetilde{y}$.
\end{cor}

\begin{rem}\label{plat remarque R}
\begin{itemize}
\item Soit $\Xsch$ un $\R$-schéma localement de présentation finie. Alors \[ \left( \Xsch \times_\R \Spec(\C) \right)^\an \cong \Xsch^\an \times_\R \M(\C) \] par propriété universelle.
\item Soient $f : X \rightarrow Y$ un morphisme d'espaces $\R$-analytiques et $x \in X$. Si $f_\C : X_\C \rightarrow Y_\C$ est plat en tout point de $p_X^{-1}(x)$, alors $f$ est plat en $x$. En effet, cela implique que le morphisme $\O_x \otimes_\R \C \cong \Pi_{z \in p_X^{-1}(x)}\O_z \rightarrow \O_y \otimes_\R \C \cong \Pi_{z \in p_X^{-1}(x)}\O_{f(z)}$ est plat et on conclut par \cite[\href{https://stacks.math.columbia.edu/tag/00HJ}{Tag 00HJ}]{stacks_project_authors_stacks_2021}.
\end{itemize}
\end{rem}

\begin{proof}
On pose $s \in S$ l'image de $\widetilde{x}$. Comme $f : \Xsch \rightarrow \Ysch$ est plat en $\rho(x)$, $f \times_\Ssch \Spec(\H(s)) : \Xsch \times_\Ssch \Spec(\H(s)) \longrightarrow \Ysch \times_\Ssch \Spec(\H(s))$ est plat en tout point au-dessus de $\rho(x)$. Si $\H(s)$ est archimédien, on peut supposer $\H(s) \cong \C$ d'après la remarque \ref{plat remarque R} et on déduit de \cite[Exposé~XII, Proposition~3.1]{grothendieck_revetements_2003} que $(f \times_\Ssch \Spec(\H(s)))^\an : (\Xsch \times_\Ssch \Spec(\H(s)))^\an \longrightarrow (\Ysch \times_\Ssch \Spec(\H(s)))^\an$ est plat en tout point au-dessus de $\rho(x)$. Si $\H(s)$ est non-archimédien, on obtient le même résultat par \cite[Proposition~3.4.6]{berkovich_spectral_1990}. Or, par propriété universelle, on a $(\Xsch \times_\Ssch \Spec(\H(s)))^\an \cong \left( \Xsch_S \right)_s$ et $(\Ysch \times_\Ssch \Spec(\H(s)))^\an \cong \left( \Ysch_S \right)_s$. On en déduit que le morphisme $\O_{\left( \Ysch_S \right)_s,\widetilde{y}} \longrightarrow \O_{\left( \Xsch_S \right)_s,\widetilde{x}}$ est plat. Par le corollaire \ref{plat par fibres cor}, on conclut que $\O_{\Ysch_S,\widetilde{y}} \longrightarrow \O_{\Xsch_S,\widetilde{x}}$ et $\O_s \rightarrow \O_{\Ysch_S,\widetilde{y}}$ sont plats.
\end{proof}

Si $\A$ est un anneau de Dedekind analytique alors \cite[Proposition~6.6.7]{lemanissier_espaces_2020} s'applique et on déduit des corollaires \ref{plat désanalytification Dedekind analytique cor} et \ref{plat analytification cor} le critère suivant :

\begin{cor}
Soient $\Xsch$ et $\Ysch$ des $\A$-schémas localement de présentation finie, $f : \Xsch \rightarrow \Ysch$ un morphisme de $\A$-schémas et $x \in \Xsch^\an$. On suppose que $\Xsch \rightarrow \Spec(\A)$ est plat en $\rho(x)$. Alors $f : \Xsch \rightarrow \Ysch$ est plat en $\rho(x)$ si et seulement si $f^\an : \Xsch^\an \rightarrow \Ysch^\an$ est plat en $x$.
\end{cor}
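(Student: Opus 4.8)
The plan is to establish the two implications separately, each one reducing to exactly one of the two corollaries quoted immediately before the statement; recall that $\A$ is assumed to be an anneau de Dedekind analytique throughout.

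The reverse implication is immediate. If $f^\an : \Xsch^\an \to \Ysch^\an$ is flat at $x$, then Corollaire \ref{plat désanalytification Dedekind analytique}, applied to the morphism $f : \Xsch \to \Ysch$ and the point $x$, yields directly that $f$ is flat at $\rho(x)$. No use of the standing hypothesis on $\Xsch \to \Spec(\A)$ is needed for this direction.

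For the direct implication I would apply Corollaire \ref{plat analytification} with base scheme $\Ssch = \Spec(\A)$ and analytic base $S = \M(\A)$, the latter mapping to $\Ssch^\an = \M(\A)$ by the identity. With these choices the fibre products degenerate: one has canonically $\Xsch_S = \Xsch^\an \times_{\M(\A)} \M(\A) = \Xsch^\an$ and likewise $\Ysch_S = \Ysch^\an$, so that $f_S$ is exactly $f^\an$ and the distinguished point $\widetilde{x}$ is $x$. The assumption that $f : \Xsch \to \Ysch$ is flat at $\rho(x)$ is given; the only remaining hypothesis of Corollaire \ref{plat analytification} to verify is that the structure morphism $\Xsch_S \to S$, namely $\Xsch^\an \to \M(\A)$, is flat at $x$. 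This is precisely where the standing hypothesis that $\Xsch \to \Spec(\A)$ is flat at $\rho(x)$ enters: since $\A$ is a Dedekind analytic ring, \cite[Proposition~6.6.7]{lemanissier_espaces_2020} transfers this scheme-theoretic flatness through analytification and gives that $\Xsch^\an \to \M(\A)$ is flat at $x$. With this verified, Corollaire \ref{plat analytification} applies and delivers the flatness of $f_S = f^\an$ at $x$, which is the desired conclusion.

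The main obstacle is exactly this last verification. The flatness of $f$ alone does not formally force the analytified structure morphism $\Xsch^\an \to \M(\A)$ to be flat at $x$, so one genuinely needs a statement asserting that analytification preserves flatness relative to the base over a Dedekind analytic ring; this is the role played by \cite[Proposition~6.6.7]{lemanissier_espaces_2020}. Once that input is supplied, the two corollaries dovetail and no fibrewise argument has to be reproduced by hand, since all of it is already packaged inside the fibrewise criterion Corollaire \ref{plat analytification}.
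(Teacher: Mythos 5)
Your proposal is correct and follows exactly the route the paper indicates: the paper gives no written-out proof, but the sentence introducing this corollary states that it is deduced from the corollaires \ref{plat désanalytification Dedekind analytique} and \ref{plat analytification}, with \cite[Proposition~6.6.7]{lemanissier_espaces_2020} supplying the flatness of $\Xsch^\an \rightarrow \M(\A)$ at $x$ from the hypothesis on $\Xsch \rightarrow \Spec(\A)$ — precisely the two implications and the specialisation $\Ssch = \Spec(\A)$, $S = \M(\A)$ that you spell out. Your identification of where the standing flatness hypothesis is used (only in the direct implication) is also accurate.
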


\begin{cor}\label{plat projection affine cor}
Soient $S$ un espace $\A$-analytique et $n \in \N$. Le morphisme de projection $\pi_S : \Aff^n_S \rightarrow S$ est plat.
\end{cor}

\begin{proof}
On commence par montrer que $\pi : \Aff^n_\A \rightarrow \M(\A)$ est plat. Soient $x \in \Aff^n_\A$ et $s = \pi(x)$. On montre que $\pi$ est plat en $x$. Si $\O_s$ est un corps alors le résultat est immédiat et on se ramène donc au cas où $\O_s$ est un anneau de valuation discrète dont on notera $\varpi$ une uniformisante. Il suffit de montrer que $\O_x$ est sans $\varpi$-torsion. Comme $\O_x$ est intègre, cela revient à montrer $\pi_x^\sharp(\varpi) \neq 0$. Soit $U \subset \Aff^n_\A$ un voisinage ouvert de $x$. Alors, d'après la proposition \ref{projection affine ouverte prop}, $\pi(U)$ est un voisinage ouvert de $s$. Si $\varpi$ s'annulait en tout point de $\pi(U)$, alors on déduirait du théorème \ref{nullstellensatz thm} que $\varpi$ est nilpotent dans $\O_s$, ce qui est faux. On dispose alors de $t \in \pi(U)$ vérifiant $\varpi(t) \neq 0$ et il existe $z \in U_t$ tel que $\varpi(z) = \varpi(t) \neq 0$. On en déduit que $\pi_x^\sharp(\varpi) \neq 0$ et que $\pi$ est plat en $x$.
\smallbreak
On revient à présent sur le cas général. Soient $x \in \Aff^n_S$ et $s = \pi_S(x)$. On montre que $\pi_S$ est plat en $x$. La propriété recherchée étant locale, on peut supposer que $S$ est un modèle local d'espace $\A$-analytique et on dispose alors de $m \in \N$ et d'un faisceau d'idéaux cohérent $\I \subset \O_{\Aff^m_\A}$ vérifiant $S \cong \Supp{\O_{\Aff^m_\A}/\I}$ et $\Aff^n_S \cong \Supp{\O_{\Aff^{n+m}_\A}/\I\O_{\Aff^{n+m}_\A}}$. En notant $s'$ (resp. $x'$) l'image de $s$ (resp. $x$) dans $\Aff^m_\A$ (resp. $\Aff^{n+m}_\A$), on obtient $\O_s \cong \O_{s'}/\I_{s'}$ et $\O_x \cong \O_{x'}/\I_{s'}\O_{x'}$ et on se ramène donc au cas où $S = \Aff^m_\A$. Comme cela a été démontré ci-dessus, le morphisme $\Aff^{m+n}_\A \rightarrow \M(\A)$ est plat et on obtient alors le résultat en appliquant le corollaire \ref{plat analytification cor} au morphisme $\Aff^{n+m,\sch}_\A \rightarrow \Aff^{m,\sch}_\A$.
\end{proof}

\section{Dimension algébrique}

Dans cette section, on établit des résultats sur la dimension de Krull des anneaux locaux en certains points. On notera que la stratégie de démonstration de la proposition \ref{point isolé anneau artinien prop} et du corollaire \ref{caractérisation dim algébrique cor} est la même que celle présentée dans \cite{grauert_coherent_1984} dans le cadre de la géométrie analytique complexe. 

\begin{df}
Soit $X$ un espace $\A$-analytique. Un point $x \in X$ sera dit \emph{défini par des équations} s'il existe une immersion $\{x\} \hookrightarrow X$, c'est-à-dire que l'on dispose d'un voisinage ouvert $U \subset X$ de $x$ et de fonctions $f_1, \dots, f_d \in \mathcal{O}_X(U)$ tels que $\mathrm{Supp}(\mathcal{O}_U/(f_1, \dots, f_d)) = \{x\}$.
\end{df}

\begin{rem}\label{idéal max isole rem}
Soient $X$ un espace $\A$-analytique, $x \in X$ un point défini par des équations et $g_1, \dots, g_l$ des fonctions définies sur un voisinage de $x$ et vérifiant $(g_1, \dots, g_l)\mathcal{O}_x = \mathfrak{m}_x$. Par définition, on dispose d'un ouvert $U \subset X$ et de fonctions $f_1, \dots, f_d \in \mathcal{O}(U)$ tels que $\mathrm{Supp}(\mathcal{O}_U/(f_1, \dots, f_d)) = \{x\}$. Alors $(f_1, \dots, f_d)\mathcal{O}_x \subset (g_1, \dots, g_l)\mathcal{O}_x$ et donc, quitte à restreindre $U$, on a $\mathrm{Supp}(\mathcal{O}_U/(f_1, \dots, f_d)) \supset \mathrm{Supp}(\mathcal{O}_U/(g_1, \dots, g_l))$. On en déduit que $\{x\}~=~\mathrm{Supp}(\mathcal{O}_U/(g_1, \dots, g_l))$.
\end{rem}

\begin{lem}\label{points rigides algébriques lem}
Soient $k$ un corps valué complet et $X$ un espace $k$-analytique. Alors tous les points rigides de $X$ sont définis par des équations.
\end{lem}

\begin{proof}
Soit $x \in X$ un point rigide. Les propriétés énoncées étant locales, on peut supposer que $X$ est un modèle local d'espace $k$-analytique. Soient $n \in \mathbb{N}$ un entier et $\iota : X \hookrightarrow \mathbb{A}^n_k$ une immersion. Alors $\iota(x)$ est rigide et, s'il est défini par des équations, alors $x$ l'est aussi. On se ramène donc au cas où $X = \mathbb{A}^n_k$. Le point $\xi = \rho(x) \in \mathbb{A}^{n,\mathrm{sch}}_k$ est associé à un idéal premier $I \subset k[T_1, \dots, T_n]$ égal au noyau du morphisme d'évaluation $k[T_1, \dots, T_n] \rightarrow \mathcal{H}(x)$. Comme $x$ est rigide, $\mathcal{H}(x)$ est entier sur $k$ et donc sur son sous-anneau $k[T_1, \dots, T_n]/I$. Donc, d'après \cite[Remarque~3.1.2]{bosch_algebraic_2013}, $k[T_1, \dots, T_n]/I$ est un corps, $I$ est maximal et $\xi$ est un point fermé.
Pour tout point $y \in \rho^{-1}(\xi)$, on a $\mathcal{H}(y) = \reallywidehat{k[T_1, \dots, T_n]/I} = \widehat{\kappa(\xi)}$. Comme $\kappa(\xi)$ est fini sur $k$, on en déduit que $\kappa(\xi)$ est complet et égal à $\H(y)$. 
On peut donc associer à $y$ un unique diagramme commutatif comme suit :
\begin{center}
\begin{tikzcd}
\mathcal{M}(\kappa(\xi)) \arrow[r, "\rho_{\kappa(\xi)}"] \arrow[dr, dotted, bend right, "y"] & \mathrm{Spec}(\kappa(\xi)) \arrow[r, "\xi"] & \mathbb{A}^{n,\mathrm{sch}}_k \\
& \mathbb{A}^n_k \arrow[ur, bend right, "\rho"]
\end{tikzcd}.
\end{center}
\smallbreak
L'unicité de la flèche en pointillés étant assurée par la propriété universelle de l'analytification, on en déduit que $y = x$ et donc $\rho^{-1}(\xi) = \{x\}$. Pour finir, on sait que $I$ vérifie $\{\xi\} = V(I) \subset \mathbb{A}^{n,\mathrm{sch}}_k$ et engendre un faisceau cohérent d'idéaux $\mathcal{I} \subset \mathcal{O}_{\mathbb{A}^n_k}$ vérifiant $\rho^{-1}(\xi) = \mathrm{Supp}(\mathcal{O}_{\mathbb{A}^n_k}/\mathcal{I})$. On en déduit que $\mathrm{Supp}(\mathcal{O}_{\mathbb{A}^n_k}/\mathcal{I}) = \{x\}$ et $x$ est donc bien défini par des équations.
\end{proof}

\begin{rem}
Il semble difficile de caractériser algébriquement les points définis par des équations lorsque l'anneau de base n'est pas un corps valué complet. On peut noter l'exemple de $\M(\C^\mathrm{hyb})$ dont aucun point n'est défini par des équations malgré leurs bonnes propriétés algébriques : $\M(\C^\mathrm{hyb})$ s'écrit comme l'analytifié du schéma $\Spec(\C^\mathrm{hyb})$ et, pour tout $x \in \M(\C^\mathrm{hyb})$, $\rho(x) \in \Spec(\C^\mathrm{hyb})$ est un point fermé vérifiant $\kappa(\rho(x)) = \H(x)$.
\end{rem}

\begin{prop}\label{point isolé anneau artinien prop}
Soient $X$ un espace $\A$-analytique, $x \in X$ un point défini par des équations, $U \subset X$ un voisinage ouvert de $x$ et $f_1, \dots, f_n \in \mathcal{O}(U)$. Alors $x$ est isolé dans $\mathrm{Supp}(\mathcal{O}_U/(f_1, \dots, f_n))$ si et seulement si l'anneau $\mathcal{O}_x/(f_1, \dots, f_n)\mathcal{O}_x$ est artinien.
\end{prop}

\begin{proof}
Quitte à restreindre $U$, on peut choisir des fonctions $g_1, \dots, g_l \in \mathcal{O}(U)$ telles que $\mathfrak{m}_x = (g_1, \dots, g_l)\mathcal{O}_x$.
\smallbreak
On suppose tout d'abord que $x$ est isolé dans $\mathrm{Supp}(\mathcal{O}_U/(f_1, \dots, f_n))$. Alors, d'après la remarque \ref{idéal max isole rem}, on peut restreindre $U$ pour avoir \[ \mathrm{Supp}(\mathcal{O}_U/(f_1, \dots, f_n)) = \{x\} = \mathrm{Supp}(\mathcal{O}_U/(g_1, \dots, g_l)). \] Donc, d'après le théorème \ref{nullstellensatz thm}, on a $\mathrm{rad}\left( (f_1, \dots, f_n)\mathcal{O}_U \right) = \mathrm{rad}\left( (g_1, \dots, g_l)\mathcal{O}_U \right)$. On en déduit que $\mathrm{rad}\left( (f_1, \dots, f_n)\mathcal{O}_x \right) = \mathfrak{m}_x$ et donc que $\mathcal{O}_x/(f_1, \dots, f_n)\mathcal{O}_x$ est artinien.
\smallbreak
Réciproquement, on suppose à présent que $\mathcal{O}_x/(f_1, \dots, f_n)\mathcal{O}_x$ est artinien. Alors $\mathrm{rad}\left( (f_1, \dots, f_n)\mathcal{O}_x \right) = \mathfrak{m}_x = (g_1, \dots, g_l)\mathcal{O}_x$ et on dispose d'un entier $s \in \mathbb{N}$ tel que $(g_1^s, \dots, g_l^s)\mathcal{O}_x \subset (f_1, \dots, f_n)\mathcal{O}_x$. Donc, quitte à restreindre $U$, on a $\mathrm{Supp}(\mathcal{O}_U/(g_1^s, \dots, g_l^s)) \supset \mathrm{Supp}(\mathcal{O}_U/(f_1, \dots, f_n))$. Or, $\mathrm{Supp}(\mathcal{O}_U/(g_1^s, \dots, g_l^s))$ est en bijection avec $\mathrm{Supp}(\mathcal{O}_U/(g_1, \dots, g_l))$ qui est simplement réduit à $\{x\}$ d'après la remarque \ref{idéal max isole rem}. On en déduit que $\mathrm{Supp}(\mathcal{O}_U/(f_1, \dots, f_n)) = \{x\}$.
\end{proof}

Le résultat suivant est un exemple d'application de la proposition \ref{point isolé anneau artinien prop} :

\begin{cor}\label{isolé implique alg isolé cor}
Soient $f : X \rightarrow S$ un morphisme d'espaces $\A$-analytiques, $x \in X$ un point isolé dans sa fibre et $s = f(x)$. Alors les anneaux $\O_{X_s,x}$ et $\O_x/\m_s\O_x$ sont artiniens.
\end{cor}

\begin{proof}
Comme $x$ est isolé dans $X_s$, il y est défini par des équations et on déduit de la proposition \ref{point isolé anneau artinien prop} que son anneau local $\O_{X_s,x}$ est artinien. Or, par le théorème \ref{plat fibre algébrique thm}, on sait que $\O_{X_s,x}$ est un $\O_x/\m_s\O_x$-module fidèlement plat. En particulier, l'application induite $\Spec(\O_{X_s,x}) \rightarrow \Spec(\O_x/\m_s\O_x)$ est surjective par \cite[Exposé~IV, Proposition~2.6]{grothendieck_revetements_2003} et on en déduit que $\O_x/\m_s\O_x$ est aussi artinien par \cite[\href{https://stacks.math.columbia.edu/tag/00KJ}{Tag 00KJ}]{stacks_project_authors_stacks_2021}.
\end{proof}

\begin{cor}\label{caractérisation dim algébrique cor}
Soient $X$ un espace $\A$-analytique et $x \in X$ un point défini par des équations. Alors $\dim(\mathcal{O}_x) \leq n$ si et seulement s'il existe un voisinage ouvert $U \subset X$ de $x$ et des fonctions $f_1, \dots, f_n \in \mathcal{O}(U)$ tels que $x$ soit isolé dans $\mathrm{Supp}(\mathcal{O}_U/(f_1, \dots, f_n))$.
\end{cor}

\begin{proof}
On pose $d = \dim(\mathcal{O}_x)$ et $n$ l'entier minimal tel qu'il existe un voisinage ouvert $U \subset X$ de $x$ et des fonctions $f_1, \dots, f_n \in \mathcal{O}(U)$ vérifiant $\mathrm{Supp}(\mathcal{O}_U/(f_1, \dots, f_n)) = \{x\}$. Quitte à rétrécir $U$, la proposition \ref{point isolé anneau artinien prop} assure l'existence de fonctions $g_1, \dots, g_d \in \mathcal{O}(U)$ vérifiant $\mathrm{Supp}(\mathcal{O}_U/(g_1, \dots, g_d)) = \{x\}$, ce qui implique que $n \leq d$. Le même lemme assure que $\mathcal{O}_x/(f_1, \dots, f_n)\mathcal{O}_x$ est un anneau artinien, ce qui implique que $d \leq n$. On en déduit le résultat.
\end{proof}

\section{Morphismes non ramifiés : critère par fibres}

Dans cette section, on étudie les morphismes d'espaces $\A$-analytiques non ramifiés et on démontre un critère de ramification par fibres, répondant au passage à une conjecture énoncée dans \cite[(2.2.9)]{lemanissier_topology_2019}. On ne développe pas de théorie des différentielles de Kähler.

\begin{df}\label{def non ramifié}
Un morphisme d'espaces $\A$-analytiques $f : X \rightarrow S$ est \emph{non ramifié} en $x \in X$ si $f_x^\sharp : \O_s \rightarrow \O_x$ est non ramifié avec $s = f(x)$, c'est-à-dire $\m_x = \m_s \O_x$ et $\kappa(x)$ est une extension finie séparable de $\kappa(s)$.
\smallskip
Un morphisme $f : X \rightarrow S$ est \emph{non ramifié} s'il l'est en tout point de $X$.
\end{df}

On rappelle que le corollaire \ref{kappa ou H fini séparable cor} indique que $\kappa(x)$ est une extension finie séparable de $\kappa(s)$ si et seulement si $\H(x)$ est une extension finie séparable de $\H(s)$.

\begin{rem}\label{non ramifié rigide épais rem}
Si $f$ est non ramifié en $x$ alors $f$ est rigide épais et purement localement transcendant en $x$.
\end{rem}

\begin{prop}\label{non ramifié opérations prop}
Soient $f : X \rightarrow Y$ et $g : Y \rightarrow Z$ des morphismes d'espaces $\A$-analytiques, $x \in X$ et $y = f(x)$. Alors :
\begin{itemize}
\item[i)] Si $f$ est une immersion alors $f$ est non ramifié.
\item[ii)] Si $f$ est non ramifié en $x$ et $g$ est non ramifié en $y$ alors $g \circ f$ est non ramifié en $x$.
\end{itemize}
\end{prop}

\begin{proof}
\begin{itemize}
\item[i)] Si $f$ est une immersion alors $\O_{X,x}$ est un quotient de $\O_{Y,y}$. On en déduit le résultat.
\item[ii)] Immédiat.
\end{itemize}
\end{proof}

\begin{prop}\label{non ramifié fibre prop}
Soient $f : X \rightarrow S$ un morphisme d'espaces $\A$-analytiques, $x \in X$ et $s = f(x)$. Alors $f$ est non ramifié en $x$ si et seulement si le morphisme $f_s : X_s \rightarrow \M(\H(s))$ est non ramifié en $x$.
\end{prop}

\begin{proof}
Le corollaire \ref{kappa ou H fini séparable cor} assure que $\kappa(x)$ est une extension finie séparable de $\kappa(s)$ si et seulement si $\H(x)$ est une extension finie séparable de $\H(s)$. En particulier, on peut supposer que $x$ est rigide épais.
\medbreak
Il reste à montrer que $\m_x = \m_s\O_x$ si et seulement si $\m_{X_s,x} = \m_{\M(\H(s)),s}\O_{X_s,x}$. Comme $\m_{\M(\H(s)),s} = 0$, cela revient à montrer que $\O_x/\m_s\O_x$ est un corps si et seulement si $\O_{X_s,x}$ en est un. C'est une conséquence de la proposition \ref{rig épais corps sur les fibres prop}.
\end{proof}

\begin{cor}\label{non ramifié isolé dans fibre cor}
Soient $f : X \rightarrow S$ un morphisme d'espaces $\A$-analytiques, $x \in X$ et $s = f(x)$. Si $f$ est non ramifié en $x$ alors $x$ est isolé dans $X_s$. De plus, $f : X \rightarrow S$ est fini en $x$.
\end{cor}

\begin{proof}
D'après la proposition \ref{non ramifié fibre prop}, on sait que $f_s : X_s \rightarrow \M(\H(s))$ est non ramifié en $x$ et on en déduit que $\mathcal{O}_{X_s,x}$ est un corps. Comme $x$ est rigide dans $X_s$, on déduit du lemme \ref{points rigides algébriques lem} qu'il est défini par des équations et du corollaire \ref{caractérisation dim algébrique cor} qu'il est isolé.
\smallbreak
On en déduit que $f$ est fini en $x$ par \cite[Théorème~5.2.8]{lemanissier_espaces_2020}.
\end{proof}

\begin{cor}\label{non ramifié anneaux locaux type fini cor}
Soient $f : X \rightarrow S$ un morphisme d'espaces $\A$-analytiques, $x \in X$ et $s = f(x)$. Si $f$ est non ramifié en $x$ alors $\O_x$ est un $\O_s$-module de présentation finie.
\end{cor}

\begin{proof}
C'est une conséquence des corollaires \ref{non ramifié isolé dans fibre cor} et \ref{anneaux locaux type fini cor}.
\end{proof}

\begin{cor}\label{non ramifié fibres explicites cor}
Soient $f : X \rightarrow S$ un morphisme d'espaces $\A$-analytiques. Alors $f$ est non ramifié si et seulement si, pour tout point $s \in S$, la fibre $X_s$ s'écrit $\coprod_i \M(K_i)$ où les $K_i$ sont des extensions finies séparables de $\H(s)$.
\end{cor}

\begin{proof}
On suppose tout d'abord que $f : X \rightarrow S$ est non ramifié. Soit $s \in S$. D'après la proposition \ref{non ramifié fibre prop}, $f_s : X_s \rightarrow \M(\H(s))$ est non ramifié et on déduit du corollaire \ref{non ramifié isolé dans fibre cor} que $X_s$ est discret et s'écrit donc comme une union disjointe de points. Il suffit à présent de traiter le cas $X_s = \{x\}$. Comme $f_s : X_s \rightarrow \M(\H(s))$ est non ramifié en $x$, on a $\m_{X_s,x} = 0$ et $\O_{X_s,x}$ est un corps. Le point $x$ étant rigide dans $X_s$, on en déduit que $\O_{X_s,x} \cong \H(x)$. On a donc bien $X_s \cong \M(\H(x))$ et $\H(x)$ est une extension finie séparable de $\H(s)$ car $f_s : X_s \rightarrow \M(\H(s))$ est non ramifié.
\medbreak
Réciproquement, on suppose que pour tout point $s \in S$, la fibre $X_s$ s'écrit $\coprod_i \M(K_i)$ où les $K_i$ sont des extensions finies séparables de $\H(s)$. Alors, pour tout $x \in  X_s$, $\O_{X_s,x}$ est l'un de ces $K_i$. On en déduit que $f_s : X_s \rightarrow \M(\H(s))$ est non ramifié puis que $f : X \rightarrow S$ est non ramifié par la proposition \ref{non ramifié fibre prop}.
\end{proof}

\begin{cor}\label{non ramifié analytification cor}
Soient $f : \Xsch \rightarrow \Ssch$ un morphisme entre $\A$-schémas localement de présentation finie et $x \in \Xsch^\an$. Si $f$ est non ramifié en $\rho(x)$ alors $f^\an$ est non ramifié en $x$.
\smallbreak
On suppose de plus que $\A$ est un anneau de Dedekind analytique. Alors $f$ est non ramifié en $\rho(x)$ si et seulement si $f^\an$ est non ramifié en $x$.
\end{cor}

\begin{proof}
On suppose tout d'abord que $f$ est non ramifié en $\rho(x)$. Soient $s = f^{\an}(x) \in \Ssch^{\an}$ et $\sigma = \rho_{\Ssch}(s) \in \Ssch$. Quitte à restreindre $\Xsch$, on a $\Xsch_\sigma \cong \coprod_i \Spec(K_i)$ où les $K_i$ sont des extensions finies séparables de $\kappa(\sigma)$ d'après \cite[Corollaire~8.4.4]{bosch_algebraic_2013}. Pour tout $i$, on note $\prod_j K_{i,j}$ le produit tensoriel $K_i \otimes_{\kappa(\sigma)} \H(s)$ où les $K_{i,j}$ sont des extensions finies séparables de $\H(s)$ d'après \cite[Proposition~III.2.2]{weil_basic_1995}. On obtient : \[ \Xsch_\sigma \otimes_{\kappa(\sigma)} \H(s) \cong \coprod_{i,j} \Spec(K_{i,j}). \]
Or, d'après le lemme \ref{analytification fibre lem}, on a $(\Xsch^{\an})_s \cong (\Xsch_\sigma \otimes_{\kappa(\sigma)} \H(s))^{\an}$ et on en déduit : \[ (\Xsch^{\an})_s \cong \coprod_{i,j} \M(K_{i,j}). \]
Ceci permet de conclure d'après le corollaire \ref{non ramifié fibres explicites cor}.
\medbreak
On suppose à présent que $\A$ est un anneau de Dedekind analytique et que $f^{\an}$ est non ramifié en $x$. Alors $\H(x)$ est une extension finie séparable de $\H(s)$. On note $\widetilde{\xi}$ l'image de $x$ dans $\Xsch_\sigma \otimes_{\kappa(\sigma)} \H(s)$. Comme $x$ est rigide dans $\left( \Xsch^\an \right)_s$, on a $\kappa(\widetilde{\xi}) \cong \H(x)$ et $\kappa(\widetilde{\xi})$ est donc une extension finie séparable de $\H(s)$. De plus, d'après le théorème \ref{analytification plat Dedekind analytique thm}, le morphisme $\O_{\widetilde{\xi}} \longrightarrow \O_{\left( \Xsch^\an \right)_s,x}$ est plat. Le morphisme naturel $\O_{\left( \Xsch^\an \right)_s,x} \otimes_{\O_{\widetilde{\xi}}} \m_{\widetilde{\xi}} \longrightarrow \O_{\left( \Xsch^\an \right)_s,x}$ est donc une injection d'image $\m_{\widetilde{\xi}}\O_{\left( \Xsch^\an \right)_s,x}$. Comme $\O_{\left( \Xsch^\an \right)_s,x}$ est un corps, on a $\m_{\widetilde{\xi}}\O_{\left( \Xsch^\an \right)_s,x} = 0$ et donc $\O_{\left( \Xsch^\an \right)_s,x} \otimes_{\O_{\widetilde{\xi}}} \m_{\widetilde{\xi}} = 0$. Par fidèle platitude, on en déduit $\m_{\widetilde{\xi}} = 0$ et le morphisme $\Xsch_\sigma \otimes_{\kappa(\sigma)} \H(s) \longrightarrow \Spec(\H(s))$ est donc non ramifié en $\widetilde{\xi}$. On en conclut que $f_\sigma$ est non ramifié en $\xi$ par \cite[Proposition~8.1.11]{bosch_algebraic_2013} et donc que $f$ est non ramifié en $\xi$.
\end{proof}

\begin{rem}
Sans supposer que $\A$ est un anneau de Dedekind analytique, on peut montrer que, si $f^\an$ est non ramifié en tout point de $(f^\an)^{-1}(f^\an(x))$, alors $f$ est non ramifié en $\rho(x)$ par le corollaire \ref{non ramifié fibres explicites cor}.
\end{rem}

\begin{cor}\label{non ramifié changement de base cor}
Soient $f : X \rightarrow S$ un morphisme d'espaces $\A$-analytiques et $S'$ un espace $S$-analytique. Si $f : X \rightarrow S$ est non ramifié alors $f_{S'} : X \times_S S' \rightarrow S'$ est non ramifié.
\end{cor}

\begin{proof}
Soient $s' \in S'$ et $s \in S$ l'image de $s'$. D'après le corollaire \ref{non ramifié fibres explicites cor}, $X_s$ s'écrit $\coprod_i \M(K_i)$ où les $K_i$ sont des extensions finies séparables de $\H(s)$. On obtient alors :
\begin{align*}
f_{S'}^{-1}(s') &\cong \left( X \times_S S' \right) \times_{S'} \M(\H(s'))\\
&\cong X \times_S \M(\H(s'))\\
&\cong \left( X \times_S \M(\H(s)) \right) \times_{\M(\H(s))} \M(\H(s'))\\
&\cong X_s \times_{\M(\H(s))} \M(\H(s'))\\
&\cong \coprod_i \M(K_i) \times_{\M(\H(s))} \M(\H(s'))\\
&\cong \coprod_i  \M(K_i \reallywidehat{\otimes}_{\H(s)} \H(s'))\\
&\cong \coprod_i  \M(K_i \otimes_{\H(s)} \H(s'))
\end{align*}
où le dernier isomorphisme vient du fait que les $K_i$ sont finis sur $\H(s)$. On déduit alors de \cite[Proposition~III.2.2]{weil_basic_1995} que $f_{S'}^{-1}(s')$ s'écrit $\coprod_{i,j} \M(K_{i,j})$ où les $K_{i,j}$ sont des extensions finies séparables de $\H(s')$. On conclut par le corollaire \ref{non ramifié fibres explicites cor}.
\end{proof}

\section{Morphismes non ramifiés : structure locale}

Cette section est consacrée à l'étude de la structure locale des morphismes non ramifiés.

\begin{df}\label{def morphismes étales}
Un morphisme d'espaces $\A$-analytiques $f : X \rightarrow S$ est \emph{étale} en $x \in X$ si $f_x^\sharp : \O_s \rightarrow \O_x$ est étale avec $s = f(x)$. C'est-à-dire $f_x^\sharp : \O_s \rightarrow \O_x$ est plat et non ramifié.
\smallbreak
Un morphisme $f : X \rightarrow S$ est \emph{étale} s'il l'est en tout point de $X$.
\end{df}

\begin{prop}\label{non ramifié structure locale prop}
Soient $f : X \rightarrow S$ un morphisme d'espaces $\A$-analytiques, $x \in X$ et $s = f(x)$. On suppose que $f$ est non ramifié en $x$. On dispose alors de voisinages ouverts $U \subset X$ de $x$ et $V \subset S$ de $s$ avec $f(U) \subset V$ et d'un polynôme unitaire $P \in \O(V)[T]$ de sorte que, en notant $Y = \Supp{\O_{\Aff^1_V}/(P(T))}$, $f : U \rightarrow V$ se factorise par la projection naturelle $p : Y \rightarrow V$ :
\begin{center}
\begin{tikzcd}
U \arrow[r, "\widetilde{f}"] \arrow[dr, "f"] & Y \arrow[d, "p"] \\
& V
\end{tikzcd}
\end{center}
et on a :
\begin{itemize}
\item $\kappa(x) \cong \quotient{\kappa(s)[T]}{(P(T))}$
\item $Y_s \cong \M(\H(x))$
\item $p : Y \rightarrow V$ est étale en $\widetilde{f}(x)$
\item $\widetilde{f} : U \rightarrow Y$ est une immersion
\end{itemize}
\end{prop}

\begin{proof}
On commence par noter que, d'après le corollaire \ref{non ramifié anneaux locaux type fini cor}, $\O_x$ est un $\O_s$-module de type fini. D'après le théorème de l’élément primitif, on dispose de $\bar{u} \in \kappa(x)$ non nul vérifiant $\kappa(x) \cong \kappa(s)[\bar{u}]$. On note $u \in \O_x$ un relevé de $\bar{u}$, $\bar{P} \in \kappa(s)[T]$ le polynôme minimal de $\bar{u}$ et $d = \deg(\bar{P})$. La famille d'éléments $1, \bar{u}, \dots, \bar{u}^{d-1}$ est génératrice de $\O_x/\m_s\O_x \cong \kappa(x)$ sur $\O_s$ et, d'après le lemme de Nakayama, $\O_x$ est engendré sur $\O_s$ par $1, u, \dots, u^{d-1}$. On dispose donc de $P \in \O_s[T]$ unitaire de degré $d$ vérifiant $P(u) = 0$ dans $\O_x$ et l'image de $P$ dans $\kappa(s)[T]$ s'identifie à $\bar{P}$. On a donc bien $\kappa(x) \cong \kappa(s)[T]/(P(T))$. Soient $V \subset S$ un voisinage ouvert de $s$ tel que les coefficients de $P$ soient définis sur $\O(V)$ et $U = f^{-1}(V)$. D'après le corollaire \ref{non ramifié isolé dans fibre cor}, $f$ est fini en $x$ et donc, quitte à restreindre $U$, on peut supposer que $U_s = \{x\}$. D'après \cite[Lemme~5.1.3]{lemanissier_espaces_2020} et quitte à rétrécir une nouvelle fois $U$ et $V$, on peut donc supposer $u \in \O(U)$ et $P(u) = 0$ dans $\O(U)$. D'après la proposition \ref{section globale représenté par A^n prop}, on dispose d'un unique morphisme $\widetilde{g} : U \rightarrow \Aff^1_\A$ vérifiant $\widetilde{g}^\sharp(T) = u$. On note $p_{\Aff^1_\A} : \Aff^1_V \rightarrow \Aff^1_\A$ et $p_V : \Aff^1_V \rightarrow V$ les projections naturelles. On considère $g : U \rightarrow \Aff^1_V$ l'unique morphisme vérifiant $p_{\Aff^1_\A} \circ g = \widetilde{g}$ et $p_V \circ g = f$, comme sur le diagramme suivant :
\begin{center}
\begin{tikzcd}
U \arrow[dr, dotted, "g"] \arrow[drr, bend left, "\widetilde{g}"] \arrow[ddr, bend right, "f"] & & \\
& \Aff^1_V \arrow[r] \arrow[d] & \Aff^1_\A \arrow[d] \\
& V \arrow[r] & \M(\A)
\end{tikzcd}.
\end{center}
On pose maintenant $Y = \Supp{\O_{\Aff^1_V}/(P(T))}$. On a $g^\sharp(P(T)) = P(u) = 0$ et on déduit donc de la proposition \ref{factorisation par fermé prop} que $g : U \rightarrow \Aff^1_V$ se factorise de façon unique par l'immersion fermée $Y \hookrightarrow \Aff^1_V$. Le morphisme $\widetilde{f} : U \rightarrow Y$ obtenu est celui recherché.
\smallbreak
On note $y = \widetilde{f}(x)$. D'après la proposition \ref{kappa hensélien prop}, $\kappa(s)$ est hensélien et on déduit donc de \cite[Proposition~2.4.1]{berkovich_etale_1993} que \[ \H(x) \cong \kappa(x) \otimes_{\kappa(s)} \H(s) \cong \quotient{\H(s)[T]}{(P(T))}. \]
On en conclut que $Y_s \cong \M(\H(x))$ et donc, d'après la proposition \ref{non ramifié fibre prop}, $p : Y \rightarrow V$ est non ramifié en $y$. Ce morphisme étant plat car $P(T)$ est unitaire, on en déduit qu'il est étale en $y$.
\smallbreak
On montre à présent que, quitte à restreindre $U$, $\widetilde{f} : U \rightarrow Y$ est une immersion. D'après le lemme \ref{fini condition surjection lem}, il suffit de montrer que $\widetilde{f}^\sharp_x : \O_y \rightarrow \O_x$ est surjectif. Comme $Y_s$ est réduit à un point, on déduit de la proposition \ref{morphisme fini anneaux locaux prop} que $\O_y \cong \O_s[T]/(P(T))$. On pose $\m \subset \O_s[u]$ l'image réciproque de $\m_x$ par l'injection $\O_s[u] \hookrightarrow \O_x$. L'inclusion $\O_s[u]/\m \hookrightarrow \kappa(x)$ définissant $\kappa(x)$ comme un $\O_s[u]/\m$-module de type fini, on déduit de \cite[Remarque~3.1.2]{bosch_algebraic_2013} que $\O_s[u]/\m$ est un corps et $\m \subset \O_s[u]$ est donc un idéal maximal. Alors l'image réciproque de $\m$ par la surjection $\O_y \rightarrow \O_s[u]$ est $\m_y$ et $\O_y \rightarrow \left( \O_s[u] \right)_\m$ est donc encore surjectif. Or, d'après \cite[Lemme~2.3.4]{fu_etale_2011}, $\O_x \cong \left( \O_s[u] \right)_\m$ et on en déduit que $\widetilde{f}^\sharp_x : \O_y \rightarrow \O_x$ est surjectif et que, quitte à restreindre $U$, $\widetilde{f} : U \rightarrow Y$ est une immersion.
\end{proof}

\begin{cor}\label{non ramifié fermé dans étale cor}
Soient $f : X \rightarrow S$ un morphisme d'espaces $\A$-analytiques et $x \in X$. Alors $f$ est non ramifié en $x$ si et seulement si on dispose d'un voisinage ouvert $U \subset X$ de $x$ tel que $f|_U$ se factorise en $U \hookrightarrow Y \rightarrow S$ où $U \hookrightarrow Y$ est une immersion fermée et $Y \rightarrow S$ est étale en l'image en $x$.
\end{cor}

\begin{proof}
On suppose que $f : X \rightarrow S$ est non ramifié. Alors, d'après la proposition \ref{non ramifié structure locale prop}, on dispose d'un voisinage ouvert $U \subset X$ de $x$, d'un morphisme $p : Y \rightarrow S$ étale en l'image de $x$ et d'une immersion $\widetilde{f} : U \hookrightarrow Y$ tels que $f = p \circ \widetilde{f}$. Quitte à restreindre $Y$, on peut supposer que $\widetilde{f}$ est une immersion fermée. La réciproque découle de la proposition \ref{non ramifié opérations prop}.
\end{proof}

\begin{lem}\label{extension finie ramification lem}
Soient $f : X \rightarrow S$ un morphisme d'espaces $\A$-analytiques, $x \in X$ et $s = f(x)$. Soient maintenant $K$ une extension finie de $\H(s)$ et $x_1, \dots, x_n \in X_K = X \times_S \M(K)$ les points au-dessus de $x$. On suppose que le morphisme structural $X_K \rightarrow \M(K)$ est non ramifié en $x_1, \dots, x_n$. Alors $f :  X \rightarrow S$ est non ramifié en $x$. 
\end{lem}

\begin{proof}
D'après la proposition \ref{morphisme fini anneaux locaux prop}, on a : \[ \O_{X_s,x} \otimes_{\H(s)} K \cong \prod_{i = 1}^n \O_{X_K, x_i}. \]
Or, d'après le corollaire \ref{non ramifié fibres explicites cor}, les anneaux locaux $\O_{X_K, x_i}$ sont des extensions finies séparables de $K$. On déduit alors de \cite[Lemme~8.4.7]{bosch_algebraic_2013} que $\O_{X_s,x}$ s'écrit comme produit d'extensions finies séparables de $\H(s)$. Un tel produit ne pouvant être un anneau local que s'il comporte un unique facteur, on obtient que $\O_{X_s,x}$ est un corps ainsi qu'une extension finie séparable de $\H(s)$ et donc que $f_s : X_s \rightarrow \M(\H(s))$ est non ramifié en $x$. La proposition \ref{non ramifié fibre prop} permet donc de conclure.
\end{proof}

\begin{prop}\label{non ramifié diagonale prop}
Soient $f : X \rightarrow S$ un morphisme d'espaces $\A$-analytiques et $x \in X$. Alors $f : X \rightarrow S$ est non ramifié en $x$ si et seulement si le morphisme diagonal $\Delta_f : X \rightarrow X \times_S X$ est un isomorphisme local en $x$.
\end{prop}

\begin{proof}
On suppose que $f$ est non ramifié en $x$. D'après la proposition \ref{non ramifié structure locale prop} et quitte à restreindre $X$, on a $f = \widetilde{f} \circ \iota$ avec $\iota : X \hookrightarrow Y$ une immersion fermée d'espaces $S$-analytiques et $\widetilde{f} : Y \rightarrow S$ un morphisme d'espaces $\A$-analytiques étale en $y = \iota(x)$ et vérifiant $Y_s \cong \M(\H(x))$. Comme $\H(x)$ est une extension finie séparable de $\H(s)$, \cite[Proposition~III.2.2]{weil_basic_1995} assure que $\H(x) \otimes_{\H(s)} \H(x) \cong \prod_i K_i$ où les $K_i$ sont des extensions finies séparables de $\H(x)$. On en déduit que $Y_s \times_{\H(s)} Y_s \cong \coprod_i \M(K_i)$. Comme $\Delta_{\widetilde{f}_s} : Y_s \rightarrow Y_s \times_{\H(s)} Y_s$ est une immersion d'après la proposition \ref{graphe immersion prop}, $\H(\Delta_{\widetilde{f}_s}(y)) = \H(y)$ et donc $\O_{\Delta_{\widetilde{f}_s}(y)} \cong \H(y) \cong \O_y$. On obtient que $\Delta_{\widetilde{f}_s} = (\Delta_{\widetilde{f}})_s$ est un isomorphisme local en $y$ et, en particulier, est plat en $y$. On déduit donc du corollaire \ref{plat par fibres cor} que $\Delta_{\widetilde{f}}$ est plat en $y$ et du lemme \ref{immersion plat donc ouvert lem} que, quitte à restreindre $Y$, $\Delta_{\widetilde{f}}$ est un isomorphisme. Le diagramme 
\begin{center}
\begin{tikzcd}
X \times_Y X \ar[r, "\Gamma_\iota \times_Y \Id_X"] \ar[d, swap, "\iota \times_Y \Id_X"] & X \times_S Y \times_Y X \ar[d, "\iota \times_S \Id_Y \times_Y \Id_X"] \\
Y \times_Y X \ar[r, "\Delta_{\widetilde{f}} \times_Y \Id_X"] & Y \times_S Y \times_Y X 
\end{tikzcd}
\end{center}
étant cartésien, on obtient que $\Gamma_\iota \times_Y \Id_X$ est un isomorphisme. De plus, $\Delta_\iota$ est un isomorphisme local en $x$ car $\iota$ coïncide avec l'immersion composée $X \hookrightarrow X \times_Y X \hookrightarrow Y \times_Y Y \cong Y$. On en conclut que $\Delta_f$, étant égal au morphisme composé
\begin{center}
\begin{tikzcd}
X \ar[r, "\Delta_\iota"] & X \times_Y X \ar[r, "\Gamma_\iota \times_Y \Id_X"] & X \times_S Y \times_Y X \ar[r, "\sim"] & X \times_S X
\end{tikzcd},
\end{center}
est un isomorphisme local en $x$.
\smallbreak
Réciproquement, on suppose que $\Delta_f$ est un isomorphisme local en $x$. D'après la proposition \ref{graphe immersion prop}, $\Delta_f$ est une immersion donc, quitte à rétrécir $X$, on peut supposer que c'est une immersion ouverte. De plus, d'après la proposition \ref{non ramifié fibre prop} et quitte à remplacer $X$ par $X_s$ et $S$ par $\M(\H(s))$, on peut supposer que $S = \M(k)$ où $k$ est un corps valué complet. On commence par montrer qu'il est possible de se ramener au cas où $X$ est réduit à un point rationnel.
\smallbreak
On suppose tout d'abord que la valeur absolue sur $k$ n'est pas triviale. Dans ce cas, les points rigides sont denses dans $X$ et donc, par connexité locale de $X$, on dispose d'un point rigide $x_\mathrm{rig}$ dans la composante connexe de $x$. Soient $x_1, \dots, x_n \in X \times_k \M(\H(x))$ les points au-dessus de $x$. Par propriété universelle du produit fibré, la composante connexe de $x_i$ contient un point au-dessus de $x_\mathrm{rig}$ qui est rationnel pour tout $i \in [\![ 1, \dots, n ]\!]$. D'après le lemme \ref{extension finie ramification lem}, on peut donc supposer $x_\mathrm{rig}$ rationnel et on note $\widetilde{x_\mathrm{rig}}$ la section de $f$ induite. On note maintenant $p_1$ et $p_2$ les projections $X \times_k X \rightrightarrows X$ et on considère le morphisme $h : X \rightarrow X \times_k X$ vérifiant $p_1 \circ h = \mathrm{Id}$ et $p_2 \circ h$ est la composition de $\widetilde{x_\mathrm{rig}}$ et du morphisme structural de $X$, comme sur le diagramme suivant :
\begin{center}
\begin{tikzcd}
X \arrow[r, "f"] \arrow[ddr, bend right, "\mathrm{Id}"] \arrow[dr, dotted, "h"] & \mathcal{M}(k) \arrow[dr, bend left, "\widetilde{x_\mathrm{rig}}"] & \\
& X \times_k X \arrow[r, "p_2"] \arrow[d, "p_1"] & X \arrow[d] \\
& X \arrow[r] & \mathcal{M}(k)
\end{tikzcd}.
\end{center}
On a $h^{-1}(\Delta_f(X)) = \{x_\mathrm{rig}\}$. Comme $\Delta_f$ est une immersion ouverte, on en déduit que $\{x_\mathrm{rig}\}$ est isolé. On en conclut que $x = x_\mathrm{rig}$ est un point rationnel isolé de $X$.
\smallbreak
On suppose à présent que la valeur absolue sur $k$ est triviale. Soit $K$ une extension complète non trivialement valuée de $k$. Alors on peut appliquer le raisonnement ci-dessus en un point $x_K \in X_K$ au-dessus de $x$ et on en déduit que $x_K$ est un point rigide isolé de $X_K$.  D'après \cite[Lemme~1.21]{ducros_variation_2007}, on a $\dim_{K,x_K}X_K = 0$. On en déduit par \cite[Proposition~1.22]{ducros_variation_2007} que $\dim_{k,x}X = 0$ et donc $x$ est un point rigide isolé dans $X$ que l'on peut de nouveau supposer rationnel par le lemme \ref{extension finie ramification lem}.
\smallbreak
On peut donc se ramener au cas où $X$ est réduit à un point rationnel $x$. D'après le lemme \ref{card fibre prod lem}, $X \times_k X$ est aussi réduit à un point et $\Delta_f$ induit donc un isomorphisme $X \cong X \times_k X$. On note $p_1, p_2 : X \times_k X \rightrightarrows X$ les projections naturelles. L'immersion fermée $\M(k) \cong X^{\red} \hookrightarrow X$ définie par $\m_x$ induit une immersion fermée
\begin{center}
\begin{tikzcd}
X \cong X \times_k \M(k) \arrow[r, hook] & X \times_k X \cong X
\end{tikzcd}
\end{center}
définie par $p_2^*\m_x$. On a donc $\O_x \cong \O_x/p_2^*\m_x$ et on en déduit $p_2^\sharp (\m_x) = p_2^*\m_x = 0$ et donc $\m_x = \Delta_f^\sharp  (p_2^\sharp (\m_x)) = 0$. Alors $X \cong  \M(k)$ et on en conclut que $f : X \rightarrow \M(k)$ est bien non ramifié.
\end{proof}

\begin{cor}\label{non ramifié lieu ouvert cor}
Soit $f : X \rightarrow S$ un morphisme d'espaces $\A$-analytiques. Alors l'ensemble $\{ x \in X \mid f \textit{ est non ramifié en }x \}$ est ouvert dans $X$.
\end{cor}

\begin{proof}
Ce résultat découle directement de la proposition \ref{non ramifié diagonale prop}.
\end{proof}

\begin{cor}\label{non ramifié graphe ouvert cor}
Soient $S$ un espace $\A$-analytique, $f : X \rightarrow Y$ un morphisme d'espaces $S$-analytiques et $x \in X$. On suppose que $Y \rightarrow S$ est non ramifié en $f(x)$. Alors le graphe $\Gamma_f : X \rightarrow X \times_S Y$ est un isomorphisme local en $x$.
\end{cor}

\begin{proof}
D'après la proposition \ref{non ramifié diagonale prop}, on dispose d'un voisinage ouvert $V \subset Y$ de $f(x)$ tel que $\Delta$ soit un isomorphisme $V \rightarrow V \times_S V$. On pose $U = f^{-1}(V)$. Le diagramme suivant :
\begin{center}
\begin{tikzcd}
U \arrow[r, "\Gamma_f"] \arrow[d, "f"] & U \times_S V \arrow[d, "f \times \Id_V"] \\
V \arrow[r, "\Delta"] & V \times_S V
\end{tikzcd}
\end{center}
étant cartésien, on en déduit que $\Gamma_f : U \rightarrow U \times_S V$ est aussi un isomorphisme.
\end{proof}

\begin{cor}\label{non ramifié descente fini et plat cor}
Soient $f : X \rightarrow Y$ et $g : Y \rightarrow Z$ des morphismes d'espaces $\A$-analytiques, $x \in X$ et $y = f(x)$. On suppose que $g \circ f$ est fini et plat en $x$ et que $g$ est non ramifié en $y$. Alors $f$ est fini et plat en $x$.
\end{cor}

\begin{proof}
D'après le corollaire \ref{non ramifié graphe ouvert cor}, le graphe $\Gamma_f : X \rightarrow X \times_Z Y$ est un isomorphisme local en $x$. Or, d'après \cite[Proposition~5.6.5]{lemanissier_espaces_2020}, $p_Y : X \times_Z Y \rightarrow Y$ est fini et plat en tout point au-dessus de $x$. Donc $f = p_Y \circ \Gamma_f$ est fini et plat en $x$.
\end{proof}

\begin{cor}\label{non ramifié section iso local cor}
Soit $f : X \rightarrow Y$ un morphisme non ramifié d'espaces $\A$-analytiques. Alors toute section $\sigma : Y \rightarrow X$ de $f$ est une immersion ouverte.
\end{cor}

\begin{proof}
On commence par remarquer que toutes les sections sont des immersions. En effet, si $\sigma$ est une section de $f$ alors c'est un morphisme au-dessus de $Y$ et il coïncide avec son graphe $\Gamma_\sigma : Y \rightarrow X \times_Y Y \cong X$. On déduit donc de la proposition \ref{graphe immersion prop} que $\sigma$ est une immersion.
\smallbreak
On suppose maintenant que $f$ est non ramifié. Comme $f \circ \sigma = \Id_Y$, on déduit du corollaire \ref{non ramifié descente fini et plat cor} que $\sigma$ est plat puis du lemme \ref{immersion plat donc ouvert lem} que c'est une immersion ouverte.
\end{proof}

\section{Morphismes étales}

On étudie ici les morphismes étales d'espaces $\A$-analytiques, comme définis en \ref{def morphismes étales}. On présente des résultats semblables à ceux des morphismes de schémas, avec quelques différences dans le cas de la proposition \ref{étale structure locale prop} et de son corollaire \ref{étale Chevalley cor}.

\begin{prop}\label{étale opérations prop}
Soient $f : X \rightarrow Y$, $g : Y \rightarrow Z$ et $h : Y' \rightarrow Y$ des morphismes d'espaces $\A$-analytiques, $x \in X$ et $y = f(x)$. Alors :
\begin{itemize}
\item[i)] Si $f$ est étale en $x$ et $g$ est étale en $y$ alors $g \circ f$ est étale en $x$.
\item[ii)] Si $f$ est étale en $x$ alors $f_{Y'} : X \times_Y Y' \rightarrow Y'$ est étale en tout point au-dessus de $x$.
\item[iii)] Si $g \circ f$ est étale en $x$ et $g$ est non ramifié en $y$ alors $f$ est étale en $x$.
\item[iv)] Si $f$ et $h$ sont étales alors tout morphisme $X \rightarrow Y'$ au-dessus de $Y$ est étale.
\end{itemize}
\end{prop}

\begin{proof}
\begin{itemize}
\item[i)] est immédiat.
\item[ii)] Soit $x' \in X \times_Y Y'$ un point au-dessus de $x$. D'après le corollaire \ref{non ramifié changement de base cor}, il suffit de vérifier que $f_{Y'} : X \times_Y Y' \rightarrow Y'$ est plat en $x'$. Le corollaire \ref{non ramifié isolé dans fibre cor} assure que, quitte à rétrécir $X$, on peut supposer $f$ fini. On peut donc conclure par \cite[Proposition~5.6.5]{lemanissier_espaces_2020}.
\item[iii)] D'après le corollaire \ref{non ramifié graphe ouvert cor} et le lemme \ref{immersion plat donc ouvert lem}, le graphe $\Gamma_f : X \rightarrow X \times_Z Y$ est étale en $x$. Or, d'après ii), $p_Y : X \times_Z Y \rightarrow Y$ est étale en tout point au-dessus de $x$. Donc $f = p_Y \circ \Gamma_f$ est étale en $x$ par i).
\item[iv)] C'est une conséquence de \cite[\href{https://stacks.math.columbia.edu/tag/00U7}{Tag 00U7}]{stacks_project_authors_stacks_2021}
\end{itemize}
\end{proof}

\begin{prop}\label{étale par fibres prop}
Soient $S$ un espace $\A$-analytique, $f : X \rightarrow Y$ un morphisme d'espaces $S$-analytiques, $x \in X$, $y = f(x)$ et $s \in S$ l'image de $x$. On suppose que $X \rightarrow S$ est plat en $x$. Si $f_s : X_s \rightarrow Y_s$ est étale en $x$ alors $f : X \rightarrow Y$ est étale en $x$.
\end{prop}

\begin{proof}
D'après le corollaire \ref{plat par fibres cor}, $f$ est plat en $x$ et il reste à montrer que ce morphisme est non ramifié en $x$. Comme $f_s$ est non ramifié en $x$ et $f_s^{-1}(y) = f^{-1}(y)$, on conclut par la proposition \ref{non ramifié fibre prop}.
\end{proof}

\begin{prop}\label{étale structure locale prop}
Soient $f : X \rightarrow S$ un morphisme d'espaces $\A$-analytiques, $x \in X$ et $s = f(x)$. On suppose que $f$ est étale en $x$. On dispose alors de voisinages ouverts $U \subset X$ de $x$ et $V \subset S$ de $s$ avec $f(U) \subset V$ et d'un polynôme unitaire $P \in \O(V)[T]$ de sorte que, en notant $Y = \Supp{\O_{\Aff^1_V}/(P(T))}$, $f : U \rightarrow V$ se factorise par la projection naturelle $p : Y \rightarrow V$ :
\begin{center}
\begin{tikzcd}
U \arrow[r, "\widetilde{f}"] \arrow[dr, "f"] & Y \arrow[d, "p"] \\
& V
\end{tikzcd}
\end{center}
et on a :
\begin{itemize}
\item $\kappa(x) \cong \quotient{\kappa(s)[T]}{(P(T))}$
\item $Y_s \cong \M(\H(x))$
\item $\widetilde{f} : U \rightarrow Y$ est un isomorphisme local en $x$
\end{itemize}
\end{prop}

\begin{proof}
C'est une conséquence des propositions \ref{non ramifié structure locale prop} et \ref{étale opérations prop} iii) et du lemme \ref{immersion plat donc ouvert lem}.
\end{proof}

\begin{rem}\label{étale exemple présentation locale rem}
La proposition \ref{étale structure locale prop} ne peut être écrite \emph{mutatis mutandis} dans le cadre schématique qu'au prix du fait que la fibre $Y_s$ est réduite à un point. Ce résultat a effectivement des conséquences pouvant étonner le lecteur accoutumé à la théorie des schémas (voir notamment le corollaire \ref{étale Chevalley cor}). On peut considérer l'exemple suivant : soient $\Ssch = \Spec(\Z)$, $\Xsch = \Spec(\Z[i])$, $f : \Xsch \rightarrow \Ssch$ le morphisme structural, $x \in \Xsch^\an$ le point extrême de la branche $(1+2i)$-adique, $s = f^\an(x)$, $\xi = \rho(x)$ et $\sigma = f(\xi)$. Le polynôme $Q(T) = T^2 + 1$ est irréductible dans $\O_\sigma[T] \cong \Z_{(5)}[T]$ et tout voisinage de $\xi$ se plonge bien dans $\Xsch \cong V(Q(T)) \subset \Aff^{1,\sch}_\Z$. Dans ce cas, $\Xsch_\sigma$ contient deux points distincts. Par contre, $Q(T)$ est scindé dans $\O_s[T] \cong \Z_5[T]$ et, en choisissant un de ses facteurs irréductibles $P(T)$, on dispose d'un voisinage de $x$ qui se plonge dans $\Ssch^\an \cong V(P(T)) \subset \Aff^1_\Z$.
\end{rem}

\begin{cor}\label{étale Chevalley cor}
Soient $f : X \rightarrow S$ un morphisme d'espaces $\A$-analytiques, $x \in X$ et $s = f(x)$. Alors $f$ est étale en $x$ si et seulement si on dispose d'un polynôme unitaire $P(T) \in \O_s[T]$ dont l'image dans $\kappa(s)[T]$ est irréductible et séparable et tel que $f$ induise un isomorphisme : \[ \quotient{\O_s[T]}{(P(T))} \cong \O_x. \]
\end{cor}

\begin{proof}
On suppose tout d'abord que $f : X \rightarrow S$ est étale en $x$. D'après la proposition \ref{étale structure locale prop} et quitte à restreindre $X$ et $S$, on dispose d'un polynôme unitaire $P(T) \in \O(S)[T]$ vérifiant $\kappa(x) \cong \kappa(s)[T]/(P(T))$ et d'un isomorphisme local $X \rightarrow Y = \Supp{\O_{\Aff^1_S}/(P(T))}$ en $x$. En particulier, $\O_x \cong \O_y$ où $y$ désigne l'image de $x$ dans $Y$. D'après ce même lemme, $Y_s$ est réduit à un point et on déduit donc de la proposition \ref{morphisme fini anneaux locaux prop} que $\O_y \cong \O_s[T]/(P(T))$. L'image de $P(T)$ dans $\kappa(s)[T]$ est bien irréductible et séparable car $f$ est non ramifié en $x$ et $\kappa(x) \cong \kappa(s)[T]/(P(T))$.
\smallbreak
Réciproquement, on suppose l'existence d'un polynôme unitaire $P(T) \in \O_s[T]$ dont l'image dans $\kappa(s)[T]$ est irréductible et séparable de sorte que $f : X \rightarrow S$ induise un isomorphisme $\quotient{\O_s[T]}{(P(T))} \cong \O_x$. Alors $f$ est plat en $x$ car $P(T)$ est unitaire. On montre que $f$ est non ramifié en $x$. On a : \[ \quotient{\kappa(s)[T]}{(P(T))} \cong \quotient{\O_x}{\m_s\O_x}. \]
L'image de $P(T)$ dans $\kappa(s)[T]$ étant irréductible, on en déduit que $\O_x/\m_s\O_x$ est un corps et donc que $\m_s\O_x = \m_x$. De plus, ce polynôme étant séparable, on en conclut que $\kappa(x)$ est une extension finie séparable de $\kappa(s)$ et $f : X \rightarrow S$ est non ramifié en $x$ et donc étale en $x$.
\end{proof}

\begin{cor}\label{étale lieu ouvert cor}
Soit $f : X \rightarrow S$ un morphisme d'espaces $\A$-analytiques. Alors l'ensemble $\{x \in X \mid f \textit{ est étale en } x \}$ est ouvert dans $X$.
\end{cor}

\begin{proof}
Soit $x \in X$. On suppose que $f$ est étale en $x$. D'après le corollaire \ref{non ramifié lieu ouvert cor}, il suffit de montrer l'existence d'un voisinage ouvert $U \subset X$ de $x$ tel que le morphisme $U \rightarrow S$ soit plat. D'après la proposition \ref{étale structure locale prop} et quitte à restreindre $X$ et $S$, on dispose d'un espace $\A$-analytique $Y$ plat sur $S$ et localement isomorphe à $X$ au voisinage de $x$. Il convient alors de prendre $U$ isomorphe à un ouvert de $Y$.
\end{proof}

\begin{cor}\label{étale ouvert cor}
Soient $f : X \rightarrow S$ un morphisme d'espaces $\A$-analytiques et $x \in X$. Si $f$ est étale en $x$ alors $f$ est ouvert en $x$.
\end{cor}

\begin{proof}
On note $s = f(x)$. D'après la proposition \ref{étale structure locale prop}, on peut supposer $X \cong \Supp{\O_{\Aff^1_S}/(P(T))}$ où $P(T) \in \O(S)[T]$ est unitaire et $X_s$ est réduit à un point. Soit $U \subset X$ un voisinage ouvert de $x$. Alors $U$ est un voisinage de $X_s$ et, d'après \cite[Lemme~5.1.3]{lemanissier_espaces_2020}, on dispose d'un voisinage ouvert $V \subset S$ de $s$ vérifiant $f^{-1}(V) \subset U$. On sait que l'image de $P(T)$ dans $\H(s)[T]$ est de degré strictement positif car $x \in \Supp{\O_{\Aff^1_S}/(P(T))}$. Or, $P(T)$ est unitaire et, en particulier, son coefficient dominant ne s'annule en aucun point. On en déduit que, pour tout $t \in S$, l'image de $P(T)$ dans $\H(t)[T]$ est de degré strictement positif et donc le morphisme $f^{-1}(V) \rightarrow V$ induit par $f$ est surjectif. Alors $V \subset f(f^{-1}(V)) \subset f(U)$ et on en conclut que $f$ est ouvert en $x$.
\end{proof}

\begin{prop}\label{étale analytification prop}
Soient $f : \Xsch \rightarrow \Ssch$ un morphisme entre $\A$-schémas localement de présentation finie et $x \in \Xsch^\an$. Si $f$ est étale en $\rho(x)$ alors $f^\an$ est étale en $x$.
\smallbreak
On suppose de plus que $\A$ est un anneau de Dedekind analytique. Alors $f$ est étale en $\rho(x)$ si et seulement si $f^\an$ est étale en $x$.
\end{prop}

\begin{proof}
On suppose tout d'abord que $f$ est étale en $\rho(x)$. D'après le corollaire \ref{non ramifié analytification cor}, $f^\an$ est non ramifié en $x$ et il suffit de montrer qu'il est plat en $x$. Quitte à restreindre $\Ssch$ et d'après \cite[Théorème~2.3.5]{fu_etale_2011}, on dispose d'un polynôme $P(T) \in \O(\Ssch)[T]$ de sorte que $f$ se factorise en :
\begin{center}
\begin{tikzcd}
\Xsch \ar[r,"\widetilde{f}"] \ar[rd,"f"] & \Ysch = \Supp{\quotient{\O_{\Aff^1_\Ssch}}{P(T)}} \ar[d,"\pi_\Ssch"]\\
& \Ssch
\end{tikzcd}
\end{center}
où $\widetilde{f}$ est un isomorphisme local en $\rho(x)$. Alors $\Ysch$ est localement de présentation finie sur $\A$ et $f^\an = \widetilde{f}^\an \circ \left(\pi_\Ssch\right)^\an$. Or, $\widetilde{f}^\an$ est un isomorphisme local en $x$ d'après \cite[Proposition~6.5.3]{lemanissier_espaces_2020} et $\left(\pi_\Ssch\right)^\an$ est plat en $\widetilde{f}^\an(x)$ car $P(T)$ est unitaire. On en conclut que $f^\an$ est plat en $x$ et donc étale en $x$.
\smallbreak
La seconde partie de l'énoncé découle des corollaires \ref{plat désanalytification Dedekind analytique cor} et \ref{non ramifié analytification cor}.
\end{proof}

\begin{prop}\label{étale invariance topologique prop}
Soient $S$ un espace $\A$-analytique et $S_0 \hookrightarrow S$ un fermé analytique ayant le même espace topologique sous-jacent que $S$. Le foncteur $X \mapsto X \times_S S_0$ définit une équivalence entre la catégorie des espaces $\A$-analytiques étales sur $S$ et la catégorie des espaces $\A$-analytiques étales sur $S_0$.
\end{prop}

\begin{proof}
Ce foncteur est bien défini d'après la proposition \ref{étale opérations prop} i). On commence par montrer qu'il est pleinement fidèle. Soient $X$ et $Y$ des espaces $\A$-analytiques étales sur $S$. On souhaite montrer que l'application naturelle $\Hom_S(X,Y) \rightarrow \Hom_{S_0}(X \times_S S_0, Y \times_S S_0)$ est une bijection. On note $X_0 = X \times_S S_0$ et $Y_0 = Y \times_S S_0$. On a un diagramme commutatif :
\begin{center}
\begin{tikzcd}
\Hom_S(X,Y) \arrow[r] \arrow[d] & \Hom_X(X,X \times_S Y) \arrow[d] \\
\Hom_{S_0}(X_0,Y_0) \arrow[r] & \Hom_{X_0}(X_0,X_0 \times_{S_0} Y_0)
\end{tikzcd}
\end{center}
où les flèches horizontales sont définies par $f \mapsto \Gamma_f$ et ont pour réciproque la composition à gauche par la projection sur $Y$ (resp. $Y_0$). Il suffit donc de montrer que la flèche de droite, correspondant au changement de base $X_0 \rightarrow X$, est bijective. On note que, d'après le corollaire \ref{non ramifié graphe ouvert cor}, tout élément de $\Hom_X(X,X \times_SY)$ est une immersion ouverte.
\smallbreak
On renomme à présent $X$ en $S$, $X \times_S Y$ en $X$, $X_0$ en $S_0$ et $X_0 \times_{S_0} Y_0$ en $X_0$ et on note $f : X \rightarrow S$ la projection sur $S$ et $f_0 : X_0 \rightarrow S_0$ la projection sur $S_0$. Soit $\sigma \in \Hom_S(S,X)$. Comme $\sigma$ est une immersion ouverte, son image $X' \subset X$ est un ouvert pour lequel $\sigma : S \rightarrow X'$ est un isomorphisme de réciproque $f_{\mid_{X'}}$. Réciproquement, si on choisit un ouvert $X'$ de $X$ sur lequel $f$ induit un isomorphisme, la réciproque de $f_{\mid_{X'}}$ induit bien une section de $f$ qui est une immersion ouverte d'après le corollaire \ref{non ramifié section iso local cor}. On en déduit que $\Hom_S(S,X)$ est en bijection avec l'ensemble des ouverts de $X$ sur lesquels $f$ induit un isomorphisme. De la même façon, $\Hom_{S_0}(S_0,X_0)$ est en bijection avec l'ensemble des ouverts de $X_0$ sur lesquels $f_0$ induit un isomorphisme. Comme $X$ et $X_0$ ont le même espace topologique sous-jacent, les ouverts de $X$ sont en bijection avec ceux de $X_0$ via le foncteur $X' \mapsto X' \times_S S_0$ et on se ramène donc à montrer que, si $X' \subset X$ est un ouvert et $X'_0 = X' \times_S S_0$, $f_{\mid_{X'}}$ est un isomorphisme si et seulement si $f_{0 \mid_{X'_0}}$ en est un. Soient $x_0 \in X'_0$, $s_0 = f_0(x_0)$, $x \in X'$ l'image de $x_0$ et $s = f(x)$. On dispose alors d'un idéal $I \subset \O_s$ vérifiant $\O_{s_0} \cong \O_s/I$ et $\O_{x_0} \cong \O_x/I$. Comme $f$ est étale et d'après le corollaire \ref{non ramifié anneaux locaux type fini cor}, $\O_x$ est de présentation finie et plat sur $\O_s$. On en déduit que $\O_x$ est libre sur $\O_s$ d'après \cite[\href{https://stacks.math.columbia.edu/tag/00NZ}{Tag 00NZ}]{stacks_project_authors_stacks_2021} et on note $n$ l'entier vérifiant $\O_x \cong (\O_s)^n$. Comme $\O_x/I\O_x \cong (\O_s/I)^n$ est non nul, on a bien : \[ f \text{ est un isomorphisme en } x \Leftrightarrow n = 1 \Leftrightarrow f_0 \text{ est un isomorphisme en } x_0. \]
On en conclut que le foncteur $X \mapsto X \times_S S_0$ est bien pleinement fidèle.
\smallbreak
On montre à présent que ce foncteur est essentiellement surjectif. Soient $f : X_0 \rightarrow S_0$ un morphisme étale d'espaces $\A$-analytiques et $x \in X_0$. D'après la proposition \ref{étale structure locale prop}, on dispose d'un ouvert $S' \subset S$, de $S_0' = S' \times_S S_0$, d'un voisinage ouvert $U_{0,x}$ de $x$ et d'un polynôme unitaire $P_0 \in \O(S_0')[T]$ de sorte que $U_{0,x}$ soit isomorphe à un ouvert $V_{0,x}$ de $\Supp{\O_{\Aff^1_{S_0'}}/(P_0(T))}$. Quitte à restreindre $S'$, on dispose de $P(T) \in \O(S')[T]$ un relevé unitaire de $P_0(T)$. Alors $\Supp{\O_{\Aff^1_{S_0'}}/(P_0(T))}$ est isomorphe à $\Supp{\O_{\Aff^1_{S'}}/(P(T))} \times_{S'} S_0'$ qui est un fermé analytique de $\Supp{\O_{\Aff^1_{S'}}/(P(T))}$. On note $V_{0,x}^\mathrm{c}$ le complémentaire de $V_{0,x}$ dans $\Supp{\O_{\Aff^1_{S_0'}}/(P_0(T))}$. Alors l'image de $V_{0,x}^\mathrm{c}$ dans $\Supp{\O_{\Aff^1_{S'}}/(P(T))}$ est fermée et on note $V_x$ son complémentaire. On obtient : \[ U_{0,x} \cong V_{0,x} \cong V_x \times_{S'} S_0' \cong V_x \times_S S_0. \]
On vérifie à présent que l'on peut recoller les $V_x$. Soient $x,y \in X_0$ et $V_{xy}$ (resp. $V_{yx}$) l'image de $U_{0,x} \cap U_{0,y}$ dans $V_x$ (resp. $V_y$). On a alors : \[ V_{xy} \times_S S_0 \cong U_{0,x} \cap U_{0,y} \cong V_{yx} \times_S S_0 \] et, le foncteur $X \mapsto X \times_S S_0$ étant pleinement fidèle, on dispose d'un isomorphisme $\varphi_{xy} : V_{xy} \rightarrow V_{yx}$. On considère à présent un troisième point $z \in X_0$. En remarquant : \[ (U_{0,y} \cap U_{0,x}) \cap (U_{0,y} \cap U_{0,z}) \cong (U_{0,x} \cap U_{0,y}) \cap (U_{0,x} \cap U_{0,z}), \]
on déduit : \[ (V_{yx} \cap V_{yz}) \times_S S_0 \cong (V_{xy} \cap V_{xz}) \times_S S_0 \]
et donc, par pleine fidélité de $X \mapsto X \times_S S_0$, on a : \[ \varphi_{xy}^{-1} (V_{yx} \cap V_{yz}) = V_{xy} \cap V_{xz}. \]
De plus, l'image du diagramme :
\begin{center}
\begin{tikzcd}
V_{xy} \cap V_{xz} \arrow[rr, "\varphi_{xy}"] \arrow[dr, "\varphi_{xz}"] & & V_{yx} \cap V_{yz} \arrow[dl, "\varphi_{yz}"] \\
& V_{zx} \cap V_{zy} &
\end{tikzcd}
\end{center}
par le foncteur pleinement fidèle $X \mapsto X \times_S S_0$ étant commutative, ce diagramme est lui-même commutatif. D'après \cite[\href{https://stacks.math.columbia.edu/tag/01JB}{Tag 01JB}]{stacks_project_authors_stacks_2021}, on dispose alors d'un espace localement annelé $X$ admettant un recouvrement par des ouverts isomorphes aux $V_x, x \in X_0$. On en déduit que $X$ est un espace $\A$-analytique étale sur $S$ et $X_0 \cong X \times_S S_0$. On en conclut que le foncteur $X \mapsto X \times_S S_0$ est bien essentiellement surjectif.
\end{proof}

\section{Morphismes lisses}

L'objectif de cette section est d'introduire la notion de morphisme lisse d'espaces $\A$-analytiques et de déduire certaines de ses propriétés des résultats des sections précédentes.

\begin{lem}\label{lisse lemme dim}
Soient $S$ un espace $\A$-analytique, $n \in \N$, $f : X \rightarrow \Aff^n_S$ un morphisme d'espaces $S$-analytiques, $s \in S$ et $x \in X$ un point rigide épais au-dessus de $s$. Si $f$ est étale en $x$ alors $n = \dim(\O_x)-\dim(\O_s)$.
\end{lem}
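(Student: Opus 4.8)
Le plan est de calculer $\dim(\O_x)$ en trois réductions successives : d'abord à l'anneau local du point image dans $\Aff^n_S$, puis à l'anneau local à l'origine de la fibre au-dessus de $s$, enfin à $\O_s$ grâce à la description de $\O_{0_s}$ fournie par le lemme \ref{point 0 anneau local}. Je commence par poser $y = f(x) \in \Aff^n_S$. Comme $f$ est un morphisme d'espaces $S$-analytiques, on a $\pi_S(y) = s$, et comme $f$ est étale en $x$, le morphisme $\O_y \rightarrow \O_x$ est plat et non ramifié. La platitude d'un morphisme local d'anneaux locaux noethériens donne $\dim(\O_x) = \dim(\O_y) + \dim(\O_x/\m_y\O_x)$, tandis que le caractère non ramifié donne $\m_x = \m_y\O_x$, de sorte que $\O_x/\m_y\O_x = \kappa(x)$ est un corps. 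On en déduit $\dim(\O_x) = \dim(\O_y)$.

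Ensuite, je remarque que le plongement $\kappa(s) \hookrightarrow \kappa(y)$ et le plongement fini séparable $\kappa(y) \hookrightarrow \kappa(x)$ (issu du caractère non ramifié) placent $\kappa(y)$ entre $\kappa(s)$ et $\kappa(x)$ ; comme $x$ est rigide épais, $\kappa(x)$ est finie sur $\kappa(s)$, donc $\kappa(y)$ l'est aussi et $y$ est rigide épais au-dessus de $s$ relativement à $\pi_S$. J'applique alors le lemme \ref{rig épais pt 0} au point $y$ : en notant $0_s$ le point $0$ de la fibre au-dessus de $s$, on obtient \[ \O_y \cong \quotient{\O_{0_s}[S_1,\dots,S_n]}{(P_1(S_1)-T_1,\dots,P_n(S_1,\dots,S_n)-T_n)}, \] où chaque $P_i$ est unitaire en $S_i$, puisqu'il provient d'un polynôme minimal dans la preuve de ce lemme. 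Par conséquent $\O_y$ est un $\O_{0_s}$-module libre de type fini, de rang le produit des degrés, et le morphisme structural $\O_{0_s} \rightarrow \O_y$ est injectif (il admet $1$ dans une base). Une extension entière et injective préservant la dimension de Krull, on a $\dim(\O_y) = \dim(\O_{0_s})$.

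Il reste à établir $\dim(\O_{0_s}) = \dim(\O_s) + n$. D'après le lemme \ref{point 0 anneau local}, le complété $(T_1,\dots,T_n)$-adique de l'anneau local noethérien $\O_{0_s}$ est $\O_s[\![T_1,\dots,T_n]\!]$, lequel est local de dimension $\dim(\O_s)+n$, et le morphisme de complétion $\O_{0_s} \rightarrow \O_s[\![T_1,\dots,T_n]\!]$ est plat et local. Son idéal maximal est $\m_{0_s} = \m_s\O_{0_s} + (T_1,\dots,T_n)$, et la fibre fermée $\O_s[\![T_1,\dots,T_n]\!]/\m_{0_s}\O_s[\![T_1,\dots,T_n]\!]$ s'identifie à $\kappa(s)$, donc est de dimension $0$. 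La formule de dimension pour les morphismes locaux plats donne alors $\dim(\O_s[\![T_1,\dots,T_n]\!]) = \dim(\O_{0_s})$, d'où $\dim(\O_{0_s}) = \dim(\O_s)+n$. En combinant les trois réductions, on conclut $\dim(\O_x) = \dim(\O_s) + n$, soit $n = \dim(\O_x) - \dim(\O_s)$.

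Le point véritablement délicat est cette dernière réduction : le complété $(T_1,\dots,T_n)$-adique étant plus grossier que le complété $\m_{0_s}$-adique, on ne peut pas invoquer directement l'invariance de la dimension par complétion, et il faut au contraire vérifier séparément la platitude, le caractère local, et la trivialité de la fibre fermée du morphisme $\O_{0_s} \rightarrow \O_s[\![T_1,\dots,T_n]\!]$ avant d'appliquer la formule de dimension des morphismes locaux plats. C'est l'identification de $\m_{0_s}$ avec $\m_s\O_{0_s} + (T_1,\dots,T_n)$, déjà exploitée dans la preuve du lemme \ref{rig épais ideal max fibre affine}, qui garantit que cette fibre se réduit à $\kappa(s)$.
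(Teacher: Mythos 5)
Votre preuve est correcte, mais elle emprunte pour l'étape centrale un chemin sensiblement différent de celui du papier. Les deux arguments démarrent de la même façon : poser $y = f(x)$, observer que $y$ est rigide épais au-dessus de $s$, et utiliser le fait qu'un morphisme étale préserve la dimension des anneaux locaux (ce que vous détaillez via la formule de dimension des morphismes locaux plats, le papier l'affirmant directement). La divergence porte sur l'égalité $\dim(\O_y) = n + \dim(\O_s)$ : le papier la déduit en une ligne de \cite[Théorème~9.17]{poineau_espaces_2013}, tandis que vous la redémontrez à partir des lemmes internes \ref{rig épais pt 0} et \ref{point 0 anneau local}, en passant par $\O_{0_s}$ (extension entière injective donnant $\dim(\O_y) = \dim(\O_{0_s})$, puis complétion $(T_1,\dots,T_n)$-adique plate, locale et à fibre fermée réduite à $\kappa(s)$ donnant $\dim(\O_{0_s}) = \dim(\O_s)+n$). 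Votre variante a le mérite d'être autonome vis-à-vis de la référence externe et de réutiliser la machinerie déjà développée dans la section~3 ; elle identifie en outre correctement le point délicat, à savoir que la complétion $(T_1,\dots,T_n)$-adique est plus grossière que la complétion $\m_{0_s}$-adique, ce qui interdit d'invoquer brutalement l'invariance de la dimension par complétion. Un seul bémol : le caractère unitaire des $P_i$, dont dépend la finitude de $\O_y$ sur $\O_{0_s}$, ne figure pas dans l'énoncé du lemme \ref{rig épais pt 0} mais seulement dans sa preuve ; il faudrait soit renforcer cet énoncé, soit justifier ce point explicitement. La citation de Poineau, elle, livre directement le résultat pour tout point rigide épais de $\Aff^n_S$, au prix d'une dépendance externe.
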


\begin{proof}
On pose $y = f(x)$. Alors $y$ est rigide épais au-dessus de $s$ et on déduit de \cite[Théorème~9.17]{poineau_espaces_2013} que $\dim(\O_y) = n + \dim(\O_s)$. Comme $f^\sharp_x : \O_y \rightarrow \O_x$ est étale, on a $\dim(\O_x) = \dim(\O_y) = n + \dim(\O_s)$ et on en déduit le résultat.
\end{proof}

\begin{df}\label{def morphisme lisse}
Un morphisme d'espaces $\A$-analytiques $f : X \rightarrow S$ est \emph{lisse} en $x \in X$ si on dispose de $n \in \N$ et d'un voisinage ouvert $U \subset X$ de $x$ de sorte que $f|_U$ se factorise en :
\begin{center}
\begin{tikzcd}
U \arrow[r,"\widetilde{f}"] \arrow[dr, "f"] & \Aff^n_S \arrow[d, "\pi_S"] \\
& S
\end{tikzcd}
\end{center}
où $\widetilde{f}$ est étale en $x$. L'entier $n$ vérifiant cette propriété est noté $\dim_xf$ et appelé \emph{dimension relative de $f$} en $x$.
\smallbreak
Un morphisme d'espaces $\A$-analytiques $f : X \rightarrow S$ est \emph{lisse} s'il l'est en tout point de $X$.
\end{df}

\begin{rem}
Dans le cadre de la définition \ref{def morphisme lisse}, $\dim_xf$ est unique. En effet, la propriété universelle du produit fibré implique l'existence d'un $\H(x)$-point $\widetilde{x}$ dans $U \times_S \M(\H(x))$. Alors le morphisme étale $\widetilde{f} \times_S \M(\H(x)) : U \times_S \M(\H(x)) \rightarrow \Aff^n_{\H(x)}$ est rigide épais en $\widetilde{x}$ et on déduit du lemme \ref{lisse lemme dim} : \[ \dim_xf = \dim(\O_{\widetilde{x}})-\dim(\H(x)) = \dim(\O_{\widetilde{x}}). \]
\end{rem}

\begin{prop}\label{lisse opérations prop}
Soient $f : X \rightarrow Y$, $g : Y \rightarrow Z$ et $h : Y' \rightarrow Y$ des morphismes d'espaces $\A$-analytiques, $x \in X$ et $y = f(x)$. Alors :
\begin{itemize}
\item[i)] Si $f$ est lisse en $x$ et $g$ est lisse en $y$ alors $g \circ f$ est lisse en $x$ et $\dim_x (g \circ f) = \dim_yg + \dim_xf$.
\item[ii)] Si $f$ est lisse en $x$ alors $f_{Y'} : X \times_Y Y' \rightarrow Y'$ est lisse de dimension relative $\dim_xf$ en tout point au-dessus de $x$.
\item[iii)] Si $g \circ f$ est lisse en $x$ et $g$ est non ramifié en $y$ alors $f$ est lisse en $x$ et $\dim_xf = \dim_x (g \circ f)$.
\end{itemize}
\end{prop}

\begin{proof}
\begin{itemize}
\item[i)] On dispose d'un voisinage $U \subset X$ de $x$ (resp. $V \subset Y$ de $y$), d'un entier $n$ (resp. $m$) et d'un morphisme $\widetilde{f} : U \rightarrow \Aff^n_Y$ (resp. $\widetilde{g} : V \rightarrow \Aff^m_Z$) au-dessus de $f$ (resp. $g$) étale en $x$ (resp. $y$). D'après la proposition \ref{étale opérations prop} ii), $\Aff^n_{\widetilde{g}} : \Aff^n_V \rightarrow \Aff^{m+n}_Z$ est étale en $\widetilde{f}(x)$. Quitte à restreindre $U$, on peut supposer $U \subset \widetilde{f}^{-1}(\Aff^n_V)$ et on obtient, d'après la proposition \ref{étale opérations prop} i), que $\Aff^n_{\widetilde{g}} \circ \widetilde{f} : U \rightarrow \Aff^{m+n}_Z$ est étale en $x$. On en conclut que $g \circ f = \pi_Z \circ \Aff^n_{\widetilde{g}} \circ \widetilde{f}$ est lisse en $x$.
\item[ii)] C'est une conséquence de la proposition \ref{étale opérations prop} ii).
\item[iii)] D'après le corollaire \ref{non ramifié graphe ouvert cor}, $\Gamma_f : X \rightarrow X \times_Z Y$ est un isomorphisme local en $x$ et est donc lisse en $x$. De plus, d'après ii), $p_Y : X \times_Z Y$ est lisse en $\Gamma_f(x)$. On déduit donc de i) que $f = p_Y \circ \Gamma_f$ est lisse en $x$.
\end{itemize}
\end{proof}

\begin{prop}\label{lisse analytification prop}
Soient $f : \Xsch \rightarrow \Ssch$ un morphisme entre $\A$-schémas localement de présentation finie et $x \in \Xsch^\an$. Si $f$ est lisse en $\rho(x)$ alors $f^\an$ est lisse en $x$.
\end{prop}

\begin{proof}
Cela découle de la proposition \ref{étale analytification prop} et du fait que $\left(\Aff^n_\Ssch\right)^\an = \Aff^n_{\Ssch^\an}$.
\end{proof}

\begin{prop}\label{lisse lieu ouvert prop}
Soient $f : X \rightarrow S$ un morphisme d'espaces $\A$-analytiques et $n \in \N$. Alors l'ensemble \[\{x \in X \mid \textrm{ $f$ est lisse en $x$ de dimension relative $n$} \}\] est ouvert dans $X$.
\end{prop}

\begin{proof}
Soit $x \in X$ un point en lequel $f$ est lisse de dimension relative $n$. On dispose d'un voisinage $U \subset X$ de $x$ et d'un morphisme $\widetilde{f} : U \rightarrow \Aff^n_S$ étale en $x$ et vérifiant $f = \pi_S \circ \widetilde{f}$. D'après le corollaire \ref{étale lieu ouvert cor} et quitte à restreindre $U$, on peut supposer que $\widetilde{f}$ est étale. Alors $f$ est lisse et $\dim_xf = n$ en tout point de $U$.
\end{proof}

\begin{prop}\label{lisse plat prop}
Soient $f : X \rightarrow S$ un morphisme d'espaces $\A$-analytiques et $x \in X$. Si $f$ est lisse en $x$ alors $f$ est plat en $x$.
\end{prop}

\begin{proof}
Quitte à restreindre $X$, on dispose de $n \in \N$ et d'un morphisme $\widetilde{f} : X \rightarrow \Aff^n_S$ étale en $x$ vérifiant $f = \pi_S \circ \widetilde{f}$. Alors $\widetilde{f}$ est plat en $x$ et $\pi_S$ est plat en $\widetilde{f}(x)$ d'après le corollaire \ref{plat projection affine cor}. On en conclut que $f$ est plat en $x$.
\end{proof}

\begin{cor}\label{lisse non ramifié implique étale cor}
Un morphisme d'espaces $\A$-analytiques est étale en un point si et seulement s'il est lisse et non ramifié en ce point.
\end{cor}

\begin{proof}
Le sens direct est immédiat. La réciproque découle du fait que les morphismes lisses sont plats, qui est une conséquence de la proposition \ref{lisse plat prop}.
\end{proof}

\printbibliography

\end{document}